\newtheorem{thm}{Theorem}[section]
\newtheorem{cor}[thm]{Corollary}
\newtheorem{defn}[thm]{Definition}
\newtheorem{rem}[thm]{Remark}
\newtheorem{exm}{Example}
\numberwithin{equation}{section}
\begin{document}

\title{On weakly $\delta$-semiprimary ideals of commutative rings}
\author{Ayman Badawi, DENIZ SONMEZ AND GURSEL YESILOT}
	\address{Department of Mathematics  $\&$ Statistics, The American University of
Sharjah, P.O.  Box 26666, Sharjah, United Arab Emirates}\email{abadawi@aus.edu}
\subjclass{Primary 13A05, 13F05}
\keywords{semiprimary ideal, weakly semiprimary ideal, weakly prime ideal, weakly primary ideal, $\delta$-primary ideal, $\delta$-2-absorbing ideal}
\address{Yildiz Technical University, Department of Mathematics, Davutpa\c{s}a-Istanbul, Turkey, }
\email{dnzguel@hotmail.com, gyesilot@yildiz.edu.tr}

\maketitle

\begin{abstract}
	Let $R$ be a commutative ring with $ 1 \neq 0$. We recall that a proper ideal $I$ of $R$ is called a semiprimary  ideal of $R$ if whenever $a,b\in R$  and  $ab \in I$, then
$a\in \sqrt{I}$ or $b\in \sqrt{I}$. We  say $I$ is a {\it weakly semiprimary ideal} of $R$ if whenever $a,b\in R$ and $0 \not = ab \in I$, then
$a\in \sqrt{I}$ or $b\in \sqrt{I}$. In this paper, we introduce a new class of ideals that is closely related to the class of (weakly) semiprimary ideals. Let $I(R)$ be the set of all ideals of $R$ and let $\delta: I(R) \rightarrow I(R)$ be a function. Then $\delta$ is called an  expansion function of ideals of $R$ if  whenever $L, I, J$ are ideals of $R$ with $J \subseteq I$, then $L \subseteq \delta(L)$ and $\delta(J) \subseteq \delta(I)$. Let $\delta$ be an expansion function of ideals of $R$. Then a proper ideal $I$ of $R$ (i.e., $I \not = R$) is called a ({\it $\delta$-semiprimary}) {\it weakly $\delta$-semiprimary} ideal of $R$  if ($ab \in I$) $0 \not = ab \in I$ implies $a \in \delta(I)$ or $b \in \delta(I)$. For example, let $\delta: I(R) \rightarrow I(R)$ such that $\delta(I) = \sqrt{I}$. Then $\delta$ is an expansion function of ideals of $R$ and hence a proper ideal $I$ of $R$ is a ($\delta$-semiprimary) weakly $\delta$-semiprimary ideal of $R$  if and only if $I$ is a (semiprimary) weakly semiprimary ideal of $R$. A number of results concerning weakly $\delta$-semiprimary ideals and examples of
weakly $\delta$-semiprimary ideals are given.
\end{abstract}
\maketitle
\section{ Introduction}
 We assume throughout this paper that all rings are commutative with $ 1\neq 0 $. Let $R$ be a commutative ring. An ideal $I$ of $R$ is said to be proper if $ I \neq R $. Let $I$ be a proper ideal of $R$. Then $ \sqrt{I}$ denotes the radical ideal of  $I$ (i.e., $\sqrt{I} = \{x \in R \mid x^n \in I$ for some positive integer $n \geq 1\}$). Note that $\sqrt{\{0\}}$ is the set (ideal) of all nilpotent elements of $R$.

Let $I$ be a proper ideal of $R$. We recall from \cite{1} and \cite{6} that $ I $ is said to be \textit{weakly semiprime} if $ 0\neq x^{2}\in I $ implies $ x\in I $. We recall from \cite{1} (\cite{4}) that a proper ideal  $ I $  of $ R $ is said to be \textit{weakly prime} (\textit{weakly primary}) if $ 0\neq ab\in I $, then  $ a\in I $ or $ b\in I $ ($ a\in I $ or $ b\in \sqrt{I} $). Over the past several years, there has been considerable attention in the literature to prime ideals and their generalizations (for example: see \cite{1} -- \cite{10}, and \cite{12}).

Recall that a proper ideal $ I $ of a ring $R$ is called \textit{semiprimary} if whenever $ x,y\in R $ and $ xy\in I $, then $ x\in \sqrt{I} $ or $ y\in \sqrt{I} $. Gilmer in \cite{11} studied rings in which semiprimary ideals are primary.  In this paper, we define a proper ideal $ I $ of $ R $ to be \textit{weakly semiprimary} if whenever $ x,y\in R $ and $ 0\neq xy\in I $, then  $ x\in \sqrt{I} $ or $ y\in \sqrt{I} $. In fact, we will study a more general concept. Let $I(R)$ be the set of all ideals of $R$. Zhao \cite{12} introduced the concept of expansion of ideals of $R$. We recall from \cite{12} that a function $\delta : I(R) \rightarrow I(R)$ is called an expansion function of ideals of $R$ if whenever $L, I, J$ are ideals of $R$ with $J \subseteq I$, then $L \subseteq \delta(L)$ and $\delta(J) \subseteq \delta(I)$. Recall from \cite{12} that a proper ideal $I$ of $R$ is said to be a $\delta$-primary ideal of $R$ if whenever $ a, b \in  R $  with $ ab \in I$ implies $a \in I$ or $ b \in \delta(I)$, where $\delta$ is an expansion function of ideals of $R$. Let $\delta$ be an expansion function of ideals of $R$. In this paper, a proper ideal $I$ of $R$ (i.e., $I \not = R$) is called a ({\it $\delta$-semiprimary}) {\it weakly $\delta$-semiprimary} ideal of $R$  if ($ab \in I$) $0 \not = ab \in I$ implies $a \in \delta(I)$ or $b \in \delta(I)$. For example, let $\delta: I(R) \rightarrow I(R)$ such that $\delta(I) = \sqrt{I}$. Then $\delta$ is an expansion function of ideals of $R$ and hence a proper ideal $I$ of $R$ is a ($\delta$-semiprimary) weakly $\delta$-semiprimary ideal of $R$  if and only if $I$ is a (semiprimary) weakly semiprimary ideal of $R$. A number of results concerning weakly $\delta$-semiprimary ideals and examples of
weakly $\delta$-semiprimary ideals are given.

Let $\delta$ be an expansion function of ideals of a ring $R$. Among many results in this paper, it is shown (Theorem \ref{r3}) that if $I$ is a weakly $\delta$-semiprimary ideal of $R$ that is not $\delta$-semiprimary, then $I^2 = \{0\}$ and hence $I \subseteq \sqrt{\{0\}}$. If $I$ is a proper ideal of $R$ and $I^2 = \{0\}$, then $I$ need not be a weakly $\delta$-semiprimary ideal of $R$ (Example \ref{e1}). It is shown (Example \ref{e4}) that if $I, J$ are weakly $\delta$-semiprimary ideals of $R$ such that $\delta(I) = \delta(J)$ and $I + J \not = R$, then $I + J$ need not be a weakly $\delta$-semiprimary ideal of $R$. It is shown (Theorem \ref{r5.1}) that if $R$ is a Boolean ring, then every weakly semiprimary ideal of $R$ is weakly prime. It is shown (Theorem \ref{s3.1}) that if $S$ is a multiplicatively closed subset of $R$ such that $S \cap Z(R) = \emptyset$ (where $Z(R)$ is the set of all zerodivisor elements of $R$) and  $I$ is a weakly semiprimary ideal of $R$ such that $S \cap \sqrt{I} = \emptyset$, then $I_S$ is a weakly semiprimary ideal of $R_S$. It is shown (Corollary \ref{s3.3}) that if $I$ is a weakly semiprimary ideal of $R$ and $\frac{J}{I}$ is a weakly semiprimary ideal of $\frac{R}{I}$, then $J$ is a weakly semiprimary ideal of $R$. It is shown (Corollary \ref{s4.3}) that if $R = R_1\times R_2$, where $R_1, R_2$ are some rings with $1 \not = 0$, and $I$ is a proper ideal of $R$, then $I$ is a weakly semiprimary ideal of $R$ if and only if $I = \{(0, 0)\}$ or $I$  is a semiprimary ideal of $R$. It is shown (Theorem \ref{r9}) that if $I$ is a weakly $\delta$-semiprimary ideal of $R$ and $\{0\} \not = AB \subseteq I$ for some ideals $A, B$ of $R$, then $A\subseteq \delta(I)$ or $B \subseteq \delta(I)$.

\section{weakly $\delta$-semiprimary ideals}

\begin{defn}
  Let $I(R)$ be the set of all ideals of $R$.  We recall from \cite{12} that a function $\delta : I(R) \rightarrow I(R)$ is called an expansion function of ideals of $R$ if whenever $L, I, J$ are ideals of $R$ with $J \subseteq I$, then $L \subseteq \delta(L)$ and $\delta(J) \subseteq \delta(I)$.
 \end{defn}

 In the following example, we give some expansion functions of ideals of a ring $R$.

 \begin{exm} (\cite{7.1})
  Let $\delta: I(R)\rightarrow I(R)$ be a function. Then
 \begin{enumerate}
  \item If $\delta(I) = I$ for every ideal $I$ of $R$, then $\delta$ is an expansion function of ideals of $R$.
  \item If $\delta(I) = \sqrt{I}$ (note that $\sqrt{R} = R$) for every ideal $I$ of $R$, then $\delta$ is an expansion function of ideals of $R$.
  \item  Suppose that $R$ is a quasi-local ring (i.e., $R$ has exactly one maximal ideal) with maximal ideal $M$. If $\delta(I) = M$ for every proper ideal $I$ of $R$, then $\delta$ is an expansion function of ideals of $R$.
   \item Let $I$ be a proper ideal of $R$. Recall from \cite{11.1} that an element $r \in R$ is called integral over $I$ if there is an integer $n \geq 1$ and $a_i \in I^i$, $i = 1, ..., n$, $r^n + a_1r^{n-1} + a_2r^{n-2} + \cdots + a_{n-1}r + a_n = 0$. Let $\overline{I} = \{r \in R | r$ is integral over $I \}$.  Let $I \in I(R)$. It is known (see \cite{11.1}) that $\overline{I}$ is an ideal of $R$ and $I \subseteq \overline{I} \subseteq \sqrt{I}$ and if $J \subseteq I$, then $\overline{J} \subseteq \overline{I}$. If $\delta(I) = \overline{I}$ for every ideal $I$ of $R$, then $\delta$ is an expansion function of ideals of $R$.
    \item Let $J$ be a proper  ideal of $R$. If $\delta(I) = I + J$ for every ideal $I$ of $R$, then $\delta$ is an expansion function of ideals of $R$.
        \item Assume that $\delta_1, \delta_2$ are expansion functions of ideals of $R$. Let $\delta : I(R) \rightarrow I(R)$ such that $\delta(I) = \delta_1(I) + \delta_2(I)$. Then $\delta$ is an expansion function of ideals of $R$.
            \item Assume that $\delta_1, \delta_2$ are expansion functions of ideals of $R$. Let $\delta : I(R) \rightarrow I(R)$ such that $\delta(I) = \delta_1(I)\cap \delta_2(I)$. Then $\delta$ is an expansion function of ideals of $R$.
                \item Assume that $\delta_1, \delta_2$ are expansion functions of ideals of $R$. Let $\delta : I(R) \rightarrow I(R)$ such that $\delta(I) = (\delta_1 o \delta_2)(I) = \delta_1(\delta_2(I))$. Then $\delta$ is an expansion function of ideals of $R$.
\end{enumerate}
\end{exm}
We recall the following definitions.

\begin{defn}
Let $\delta$ be an expansion function of ideals of a ring $R$.
\begin{enumerate}
\item  A proper ideal $I$ of $R$ is called a ({\it $\delta$-semiprimary}) {\it weakly $\delta$-semiprimary} ideal of $R$ if whenever $ a, b \in R$ and ($ab \in I$) $0 \neq ab\in I$, then $ a\in \delta(I)$ or $b \in \delta(I)$.
    \item Recall that if  $\delta: I(R) \rightarrow I(R)$ such that $\delta(I) = \sqrt{I}$ for every proper ideal $I$ of $R$, then $\delta$ is an expansion function of ideals of $R$. In this case, a proper ideal $I$ of $R$ is called a ({\it semiprimary}) {\it weakly semiprimary} ideal of $R$ if whenever $ a, b \in R$ and ($ab \in I$) $0 \neq ab\in I$, then $ a\in \sqrt{I}$ or $b \in \sqrt{I}$.
    \item A proper ideal $I$ of $R$ is called a ({\it $\delta$-primary}) {\it weakly $\delta$-primary } ideal of $R$ if whenever $ a, b \in R$ and ($ab \in I$) $0 \neq ab \in I$, then $ a\in I$ or $b\in \delta(I)$.
\item A proper ideal $I$ of $R$ is called a {\it weakly prime} ideal of $R$ if whenever $a, b \in R$ and $0\neq ab\in I$, then $ a\in I$ or $b \in I$.
   \item  A proper ideal $I$ of $R$ is called a {\it weakly primary} ideal of $R$ if whenever $ a, b \in R$ and $0 \neq ab \in I$, then $a\in I$ or $b\in \sqrt{I}$.
   \end{enumerate}
      \end{defn}

We have the following trivial result, and hence we omit its proof.

\begin{thm}\label{r1} Let $ I $ be a proper ideal of $ R $ and let $\delta$ be an expansion function of ideals of $R$. Then
\begin{enumerate}
\item If $I$ is a $\delta$-primary ideal of $R$, then $I$ is a weakly $\delta$-semiprimary ideal of $R$. In particular, if  $ I $ is a primary ideal of $R$, then  $ I $ is a weakly semiprimary ideal of $R$.

\item If $I$ is a weakly $\delta$-primary ideal of $R$, then $I$ is a weakly $\delta$-semiprimary ideal of $R$. In particular, if  $ I $ is a weakly primary ideal of $R$, then  $ I $ is a weakly semiprimary ideal of $R$.

\item If  $I$ is a $\delta$-semiprimary ideal of $R$, then  $I$ is a weakly $\delta$-semiprimary ideal of $R$.
\item $\sqrt{\{0\}}$ is a weakly prime ideal of $R$ if and only if $\sqrt{\{0\}}$ is a weakly semiprimary ideal of $R$.
\item If $I$ is a weakly prime ideal of $R$, then $I$ is a weakly semiprimary ideal of $R$.

\end{enumerate}
\end{thm}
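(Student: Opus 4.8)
The common engine for all five parts is the defining inclusion of an expansion function, $L \subseteq \delta(L)$ for every ideal $L$; specialized to $I$ this gives $I \subseteq \delta(I)$, and in the radical case $\delta(I) = \sqrt{I}$ it is the familiar $I \subseteq \sqrt{I}$. My plan is to handle each implication uniformly: take $a,b \in R$ with $0 \neq ab \in I$ (or $ab \in I$ under the non-weak hypotheses), apply the stronger property in the hypothesis, and then absorb the conclusion into the weaker target class via these inclusions. Since every ``in particular'' clause is just the instantiation $\delta(I) = \sqrt{I}$ — under which $\delta$-primary becomes primary, weakly $\delta$-primary becomes weakly primary, and weakly $\delta$-semiprimary becomes weakly semiprimary — I would prove only the general $\delta$-statements and read off the radical cases for free.

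For (1) I would assume $I$ is $\delta$-primary and take $0 \neq ab \in I$; then $a \in I$ or $b \in \delta(I)$, and since $I \subseteq \delta(I)$ this already yields $a \in \delta(I)$ or $b \in \delta(I)$, so $I$ is weakly $\delta$-semiprimary. Part (2) is the same argument verbatim with ``weakly $\delta$-primary'' replacing ``$\delta$-primary'': from $a \in I$ or $b \in \delta(I)$ and $I \subseteq \delta(I)$ one lands in $\delta(I)$ either way. Part (3) needs no inclusion at all, since the hypothesis $ab \in I$ of a $\delta$-semiprimary ideal subsumes the more restrictive $0 \neq ab \in I$, handing over the desired conclusion directly. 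Part (5) uses only $I \subseteq \sqrt{I}$: if $I$ is weakly prime and $0 \neq ab \in I$, then $a \in I \subseteq \sqrt{I}$ or $b \in I \subseteq \sqrt{I}$, which is exactly weak semiprimariness.

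The single step where I expect any thought is (4), and it is the only genuine ``obstacle'' in an otherwise mechanical list. Writing $N = \sqrt{\{0\}}$, the forward direction is just Part (5) applied to $N$. For the converse the extra ingredient is that $N$ is a radical ideal, i.e. the idempotency $\sqrt{N} = \sqrt{\sqrt{\{0\}}} = \sqrt{\{0\}} = N$; granting this, if $N$ is weakly semiprimary and $0 \neq ab \in N$, then $a \in \sqrt{N} = N$ or $b \in \sqrt{N} = N$, so $N$ is weakly prime. Thus the crux of (4) is recognizing that the radical of the nilradical is again the nilradical, after which the equivalence collapses back onto the trivial inclusions driving the other four parts.
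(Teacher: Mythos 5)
Your proof is correct. The paper actually omits the proof of this theorem entirely (it is declared ``trivial''), and your argument --- running every part through the inclusions $I \subseteq \delta(I)$ and $I \subseteq \sqrt{I}$, plus the idempotency $\sqrt{\sqrt{\{0\}}} = \sqrt{\{0\}}$ for the converse in part (4) --- is exactly the routine verification the authors intended to leave to the reader.
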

The following is an example of a proper ideal of a ring $R$ that is a weakly semiprimary ideal of $R$ but it is neither  weakly primary  nor weakly prime.

\begin{exm}
Let $ A=Z_2[X,Y] $ where $ X,Y $ are indeterminates. Then $I = (Y^2, XY)A$ and $J = (Y^2, X^2Y^2)A$ are ideals of $A$. Set $R = A/J$. Then $L =  I/J$ is an ideal of $R$  and $\sqrt{L} = (Y, XY)A/J$.  Since $ 0\neq XY + J \in L $ and neither $ X + J\in \sqrt{L} $ nor $ Y + J \in L $, we conclude that  $L$ is not a weakly primary ideal of $R$. Since $0 + J \not = XY+J \in L$ but neither $X + J \in L$ nor  $Y + J \in L$, $L$ is not a weakly prime ideal of $R$. It is  easy to check that $L$ is a weakly semiprimary ideal of $R$.
\end{exm}
The following is an example of an ideal that is weakly semiprimary but not semiprimary.
\begin{exm}
 Let $R=Z_{36} $. Then  $I =  \{0\} $ is a weakly semiprimary ideal of $R$ by definition. Note that $\sqrt{I} = 6R$. Since $0 = 4\cdot 9 \in I$ but neither $4 \in \sqrt{I}$ nor $9 \in \sqrt{I}$, we conclude that $I$ is not a semiprimary ideal of $R$.
 \end{exm}

\begin{defn} Let $\delta$ be an expansion function of ideals of a ring $R$. Suppose that $ I  $ is  a weakly $\delta$-semiprimary ideal of $ R $  and $x \in R$. Then $ x $ is called a dual-zero element of $ I $ if $ xy=0$ for some $y \in R$  and neither $x \in \delta(I)$ nor $y \in \delta(I)$ (note that $y$ is also a dual-zero element of $I$).
\end{defn}
\begin{rem}
Let $\delta$ be an expansion function of ideals of a ring $R$. Note that if  $ I  $ is  a weakly $\delta$-semiprimary ideal of $ R $ that is not $\delta$-semiprimary, then $I$ must have a dual-zero element of $R$.
\end{rem}

\begin{thm}\label{r2}
Let $\delta$ be an expansion function of ideals of a ring $R$ and $I$ be a weakly $\delta$-semiprimary ideal of $R$. If $x\in R$ is a dual-zero element of $I$, then $xI = \{0\}$.
\end{thm}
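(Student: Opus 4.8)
The plan is to argue by contradiction. Suppose $xI \neq \{0\}$, so that there exists some $i \in I$ with $xi \neq 0$. Because $x$ is a dual-zero element of $I$, I have a companion element $y \in R$ with $xy = 0$ and neither $x \in \delta(I)$ nor $y \in \delta(I)$. The key device is to perturb $y$ by $i$ and examine the product $x(y+i)$ rather than $xy$ itself: the whole point is to manufacture a \emph{nonzero} product lying in $I$, since the weakly $\delta$-semiprimary hypothesis says nothing about the vanishing product $xy$.

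Concretely, I would compute $x(y+i) = xy + xi = 0 + xi = xi$. Since $i \in I$ and $I$ is an ideal, we have $xi \in I$, and by our contradiction hypothesis $xi \neq 0$; hence $0 \neq x(y+i) \in I$. Now I invoke that $I$ is weakly $\delta$-semiprimary: from $0 \neq x(y+i) \in I$ it follows that $x \in \delta(I)$ or $y + i \in \delta(I)$. As $x \notin \delta(I)$ by the dual-zero hypothesis, I conclude $y + i \in \delta(I)$.

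To close the argument I exploit the defining properties of $\delta$. Since $\delta$ is an expansion function, $I \subseteq \delta(I)$, so in particular $i \in \delta(I)$; moreover $\delta(I)$ is an ideal of $R$. Therefore $y = (y+i) - i \in \delta(I)$, which contradicts $y \notin \delta(I)$. Hence no such $i$ can exist, and $xI = \{0\}$.

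I do not anticipate any real obstacle here: the only nonroutine step is the substitution of $y$ by $y+i$ to produce a nonzero element of $I$, after which the weakly $\delta$-semiprimary condition together with the facts that $I \subseteq \delta(I)$ and that $\delta(I)$ is an ideal force the contradiction immediately.
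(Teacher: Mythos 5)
Your proof is correct and follows essentially the same route as the paper: perturb $y$ to $y+i$ to obtain the nonzero product $x(y+i)=xi\in I$, apply the weakly $\delta$-semiprimary hypothesis, and deduce $y\in\delta(I)$ from $y+i\in\delta(I)$ using $I\subseteq\delta(I)$ and the fact that $\delta(I)$ is an ideal. The only difference is cosmetic: you phrase it as a global contradiction against $xI\neq\{0\}$, while the paper fixes $i\in I$ and shows $xi=0$ directly, and you make explicit the step $y=(y+i)-i\in\delta(I)$ that the paper leaves implicit.
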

\begin{proof} Assume that $x \in R$ is a dual-zero element of $I$. Then $xy = 0$ for some $y \in R$ such that neither $x \in \delta(I)$ nor $y \in \delta(I)$. Let $i \in I$. Then $x(y + i) = 0 + xi = xi \in I$. Suppose that $xi \not = 0 $. Since $0 \not = x(y + i) = xi \in I$ and $I$ is a weakly $\delta$-semiprimary ideal of $R$, we conclude that $x \in \delta(I)$ or $(y + i) \in \delta(I)$ and hence $x \in \delta(I)$ or $y \in \delta(I)$, a contradiction. Thus $xi = 0$.
\end{proof}

\begin{thm}\label{r3}
Let $\delta$ be an expansion function of ideals of a ring $R$ and $I$ be a weakly $\delta$-semiprimary ideal of $R$ that is not $\delta$-semiprimary. Then $I^2 = \{0\}$ and hence $I \subseteq \sqrt{\{0\}}$.
\end{thm}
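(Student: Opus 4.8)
The plan is to exploit the dual-zero element guaranteed by the preceding Remark together with Theorem \ref{r2}. Since $I$ is weakly $\delta$-semiprimary but not $\delta$-semiprimary, there exist $a, b \in R$ with $ab \in I$ yet $a \notin \delta(I)$ and $b \notin \delta(I)$; because $I$ is weakly $\delta$-semiprimary, the product $ab$ cannot be nonzero (otherwise $0 \neq ab \in I$ would force $a \in \delta(I)$ or $b \in \delta(I)$), so $ab = 0$ and both $a$ and $b$ are dual-zero elements of $I$. Theorem \ref{r2} then gives $aI = \{0\}$ and $bI = \{0\}$.

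Next I would argue by contradiction, assuming $I^2 \neq \{0\}$, so that $ij \neq 0$ for some $i, j \in I$. The key computation is to expand $(a + i)(b + j) = ab + aj + ib + ij$. Here $ab = 0$ by construction, while $aj = 0$ and $ib = 0$ follow from $aI = \{0\}$ and $bI = \{0\}$; hence $(a + i)(b + j) = ij \neq 0$. Since $ij \in I^2 \subseteq I$, we obtain $0 \neq (a + i)(b + j) \in I$, and weak $\delta$-semiprimarity forces $a + i \in \delta(I)$ or $b + j \in \delta(I)$.

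To close the argument I would use that $\delta$ is an expansion function, so $I \subseteq \delta(I)$ and therefore $i, j \in \delta(I)$. If $a + i \in \delta(I)$, then $a = (a + i) - i \in \delta(I)$, contradicting $a \notin \delta(I)$; symmetrically, $b + j \in \delta(I)$ yields $b \in \delta(I)$, again a contradiction. This establishes $I^2 = \{0\}$. Finally, for any $x \in I$ we get $x^2 \in I^2 = \{0\}$, so $x$ is nilpotent and thus $I \subseteq \sqrt{\{0\}}$.

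I expect the main obstacle to be identifying the right elements to multiply. The only nonroutine step is forming the product $(a + i)(b + j)$ and noticing that its two cross terms vanish precisely because of Theorem \ref{r2}, leaving the nonzero product $ij$ sitting inside $I$ — which is exactly the configuration that weak $\delta$-semiprimarity forbids. Everything after that is a direct application of the expansion-function axiom $I \subseteq \delta(I)$.
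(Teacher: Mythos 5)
Your proposal is correct and follows essentially the same route as the paper: produce a dual-zero pair $a,b$ from the failure of $\delta$-semiprimarity, use Theorem \ref{r2} to annihilate the cross terms in $(a+i)(b+j)$, and derive a contradiction from $0 \neq (a+i)(b+j) = ij \in I$ using $I \subseteq \delta(I)$. The only cosmetic difference is that the paper fixes arbitrary $i,j \in I$ and shows each product $ij$ vanishes, while you phrase it as a contradiction starting from $I^2 \neq \{0\}$; the substance is identical.
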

\begin{proof}
Since $I$ is a weakly $\delta$-semiprimary ideal of $R$ that is not $\delta$-semiprimary, we conclude that $I$ has a dual-zero element $x \in R$. Since $xy = 0$ and neither $x \in \delta(I)$ nor $y \in \delta(I)$, we conclude that $y$ is a dual-zero element of $I$. Let $i, j \in I$. Then by Theorem \ref{r2}, we have $(x + i)(y + j) = ij \in I$. Suppose that $ij \not = 0$. Since $0 \not = (x + i)(y + j) = ij \in I$ and $I$ is a weakly $\delta$-semiprimary ideal of $R$, we conclude that $x + i \in \delta(I)$ or $y + j \in \delta(I)$ and hence $x \in \delta(I)$ or $y \in \delta(I)$, a contradiction. Thus $ij = 0$ and hence $I^2 = \{0\}$.
\end{proof}
In view of Theorem \ref{r3}, we have the following result.
\begin{cor}\label{r4}
Let $I$ be a weakly semiprimary ideal of $R$ that is not semiprimary. Then $I^2 = \{0\}$ and hence $I \subseteq \sqrt{\{0\}}$.
\end{cor}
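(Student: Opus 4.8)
The plan is to obtain Corollary \ref{r4} as the instance of Theorem \ref{r3} corresponding to the radical expansion function. First I would fix $\delta : I(R) \to I(R)$ to be the function given by $\delta(I) = \sqrt{I}$ for every ideal $I$ of $R$; as recorded in the example of expansion functions above, this $\delta$ is indeed an expansion function of ideals of $R$, so Theorem \ref{r3} applies to it.

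Next I would unwind the definitions for this particular choice of $\delta$. Since $\delta(I) = \sqrt{I}$, the defining condition for $I$ to be weakly $\delta$-semiprimary, namely that $0 \neq ab \in I$ forces $a \in \delta(I)$ or $b \in \delta(I)$, reads exactly as $a \in \sqrt{I}$ or $b \in \sqrt{I}$; thus $I$ is weakly $\delta$-semiprimary if and only if $I$ is weakly semiprimary. The same substitution shows that $I$ is $\delta$-semiprimary if and only if $I$ is semiprimary. Hence the hypothesis of the corollary, that $I$ is weakly semiprimary but not semiprimary, is precisely the hypothesis of Theorem \ref{r3} for this $\delta$, and the theorem delivers $I^2 = \{0\}$.

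Finally I would deduce the containment $I \subseteq \sqrt{\{0\}}$ from $I^2 = \{0\}$: for any $i \in I$ we have $i^2 \in I^2 = \{0\}$, so $i^2 = 0$ and therefore $i \in \sqrt{\{0\}}$. There is no genuine obstacle here; the only point that requires checking is the translation of the $\delta$-definitions into their radical counterparts, and that is immediate once $\delta(I) = \sqrt{I}$ is substituted. In effect the corollary is a pure specialization, so its entire content already resides in having established Theorem \ref{r3} in the general $\delta$-setting.
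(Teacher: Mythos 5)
Your proposal is correct and matches the paper's own approach exactly: the paper states Corollary \ref{r4} ``in view of Theorem \ref{r3},'' i.e., as the specialization of that theorem to the expansion function $\delta(I) = \sqrt{I}$, which is precisely what you carry out. Your explicit verification that the $\delta$-definitions translate to the radical versions, and the final deduction $I \subseteq \sqrt{\{0\}}$ from $I^2 = \{0\}$, are exactly the (omitted) routine details the paper had in mind.
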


The following example shows that a proper ideal $I$ of $R$ with the property $ I^{2}= \{0\} $ need not be a weakly semiprimary ideal of $R$.

 \begin{exm}\label{e1} Let $ R=Z_{12} $. Then $ I=\{0, 6\} $ is an ideal of $ R $ and $ I^{2}=\{0\}$. Note that $\sqrt{I} = I$. Since $0\not =  2\cdot 3 \in I$ and neither $2 \in \sqrt{I}$ nor $3 \in \sqrt{I}$, we conclude that $I$ is not a weakly semiprimary ideal of $R$.
  \end{exm}

\begin{thm}\label{r5}
Let $\delta$ be an expansion function of ideals of a ring $R$ and $I$ be a proper ideal of $R$. If $\delta(I)$ is a weakly prime of $R$, then $I$ is a weakly $\delta$-semiprimary ideal of $R$. In particular, if $\sqrt{I}$ is a weakly prime of $R$, then $I$ is a weakly semiprimary ideal of $R$.
\end{thm}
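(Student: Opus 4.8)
The plan is to argue directly from the definitions, the only substantive ingredient being the containment $I \subseteq \delta(I)$ that holds for \emph{every} expansion function by definition. So I would begin by unwinding what must be shown: to prove $I$ is weakly $\delta$-semiprimary, I take arbitrary $a, b \in R$ with $0 \neq ab \in I$ and must produce $a \in \delta(I)$ or $b \in \delta(I)$. Since $I$ is proper by hypothesis, there is nothing further to check about properness of $I$ itself.

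The single key step is to promote the membership $ab \in I$ to $ab \in \delta(I)$. Because $\delta$ is an expansion function, we have $I \subseteq \delta(I)$, and hence $0 \neq ab \in \delta(I)$. Now I invoke the hypothesis that $\delta(I)$ is weakly prime: applied to the pair $a, b$ with $0 \neq ab \in \delta(I)$, weak primeness gives $a \in \delta(I)$ or $b \in \delta(I)$. This is exactly the defining conclusion for $I$ to be weakly $\delta$-semiprimary, so the first assertion follows.

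For the ``in particular'' statement, I would simply specialize to the expansion function $\delta$ given by $\delta(I) = \sqrt{I}$, recalling (as noted after the definition) that for this $\delta$ the notions of weakly $\delta$-semiprimary and weakly semiprimary coincide. Thus if $\sqrt{I}$ is weakly prime, the first part immediately yields that $I$ is weakly semiprimary.

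I do not expect any genuine obstacle here: the argument is a short definition-chase. The one point to be careful about is to use the correct part of the expansion-function axiom, namely the containment $L \subseteq \delta(L)$ (with $L = I$), rather than the monotonicity clause $\delta(J) \subseteq \delta(I)$, which plays no role; and to note that the weak-prime hypothesis on $\delta(I)$ tacitly ensures $\delta(I)$ is proper, while the properness needed in the conclusion refers to $I$, which is assumed proper.
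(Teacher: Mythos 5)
Your proposal is correct and is essentially identical to the paper's proof: both pass from $0 \neq ab \in I$ to $0 \neq ab \in \delta(I)$ via the containment $I \subseteq \delta(I)$, then apply the weak primeness of $\delta(I)$ to conclude, with the ``in particular'' statement following by specializing $\delta$ to the radical. Your additional remarks on which axiom of expansion functions is used and on properness are accurate but not needed beyond what the paper does.
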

\begin{proof}
Suppose that $0 \not = xy \in I$ for some $x, y \in R$. Hence $0 \not = xy \in \delta(I)$. Since $\delta(I)$ is weakly prime, we conclude that $x \in \delta(I)$ or $y \in \delta(I)$. Thus $I$ is a weakly $\delta$-semiprimary ideal of $R$.
\end{proof}

 Note that if $I$ is  a weakly semiprimary ideal of a ring $R$ that is not semiprimary, then $\sqrt{I}$ need not be a weakly prime ideal of $R$. We have the following example.

 \begin{exm}\label{e2}
 $I = \{0\}$ is a weakly semiprimary ideal of $Z_{12}$. However, $\sqrt{I} = \{0, 6\}$ is not a weakly prime ideal of  $Z_{12}$. For $0 \not = 2\cdot 3 \in \sqrt{I}$, but neither $2 \in \sqrt{I}$  nor  $3 \in \sqrt{I}$.
 \end{exm}
\begin{rem}
Note that a weakly prime ideal of a ring $R$ is weakly semiprimary but the converse is not true. Let $R = \frac{Z_4[X]}{(X^3)}$. Then $\frac{(X^2)}{(X^3)}$ is an ideal of $R$. Since $0 \not = (X + (X^3))\cdot (X + (X^3)) = X^2 + (X^3) \in I$ but $X + (X^3) \notin I$, we conclude that $I$ is not a weakly prime ideal of $R$. Since $\sqrt{I} = \frac{(2, X)}{(X^3)}$ is a prime ideal of $R$, $I$ is a (weakly) semiprimary ideal of $R$.
 \end{rem}

 Let $R$ be a Boolean ring (i.e., $x^2 = x$ for every $x\in R$). Since $\sqrt{I} = I$ for every proper ideal $I$ of $R$, we have the following result.

 \begin{thm}\label{r5.1}
 Let $R$ be a Boolean ring and $I$ be a proper ideal of $R$. The following statements are equivalent.
\begin{enumerate}
 \item $I$ is a weakly semiprimary ideal of $R$.
 \item $I$ is a weakly prime ideal of $R$.
 \end{enumerate}
 \end{thm}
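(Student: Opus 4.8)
The plan is to exploit the single structural feature of a Boolean ring that the sentence preceding the theorem already highlights, namely that taking radicals does nothing: $\sqrt{I} = I$ for every proper ideal $I$. Once this identity is in hand, the two notions in the statement are literally the same condition written in two different fonts, so both implications become one-line verifications. First I would record why $\sqrt{I} = I$ holds: since $x^2 = x$ for every $x \in R$, an easy induction gives $x^n = x$ for all $n \geq 1$, so $x \in \sqrt{I}$ (i.e.\ $x^n \in I$ for some $n$) is equivalent to $x \in I$. This is the only place where the Boolean hypothesis is actually used.

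For the implication $(2) \Rightarrow (1)$, I would simply invoke Theorem \ref{r1}(5), which already asserts that every weakly prime ideal of any ring is weakly semiprimary; no appeal to the Boolean structure is even needed here. For the converse $(1) \Rightarrow (2)$, I would take $a, b \in R$ with $0 \neq ab \in I$. Applying the weakly semiprimary hypothesis gives $a \in \sqrt{I}$ or $b \in \sqrt{I}$, and substituting $\sqrt{I} = I$ from the first step yields $a \in I$ or $b \in I$, which is exactly the weakly prime condition. Thus $I$ is weakly prime.

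Honestly, there is no genuine obstacle in this argument: the content is entirely front-loaded into the observation $\sqrt{I} = I$, after which the definitions of \emph{weakly semiprimary} and \emph{weakly prime} coincide verbatim. The only point demanding the slightest care is confirming the Boolean identity propagates to all powers (so that the radical collapses), and even that is immediate from $x^2 = x$. Accordingly I would keep the write-up short, stating the radical identity explicitly and then reading off each implication from the relevant definition.
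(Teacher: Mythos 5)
Your proposal is correct and follows exactly the paper's approach: the paper proves this theorem solely via the remark preceding it, that $\sqrt{I} = I$ for every proper ideal of a Boolean ring, after which the two definitions coincide. Your write-up merely makes explicit the details (the induction $x^n = x$ and the appeal to Theorem \ref{r1}(5)) that the paper leaves implicit.
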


\begin{thm}\label{r6}
Let $\delta$ be an expansion function of ideals of a ring $R$ and $I$ be a weakly $\delta$-semiprimary ideal of $R$. Suppose that $\delta(I)= \delta(\{0\})$. The following statements are equivalent.
\begin{enumerate}
\item $I$ is not $\delta$-semiprimary.
\item $\{0\}$ has a dual-zero element of $R$.
\end{enumerate}
\end{thm}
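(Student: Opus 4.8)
The plan is to unravel the definitions and exploit the hypothesis $\delta(I) = \delta(\{0\})$, which forces the notion of a dual-zero element of $I$ and the notion of a dual-zero element of $\{0\}$ to coincide. First I would observe that $\{0\}$ is always a weakly $\delta$-semiprimary ideal of $R$: the defining hypothesis $0 \neq ab \in \{0\}$ can never be satisfied, so the required implication holds vacuously. Hence it is meaningful to speak of a dual-zero element of $\{0\}$, namely an $x \in R$ with $xy = 0$ for some $y \in R$ and with neither $x \in \delta(\{0\})$ nor $y \in \delta(\{0\})$. Because $\delta(I) = \delta(\{0\})$, this is word-for-word the condition that $x$ be a dual-zero element of $I$.

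For the implication $(1) \Rightarrow (2)$, I would invoke the Remark preceding Theorem \ref{r2}: since $I$ is weakly $\delta$-semiprimary but not $\delta$-semiprimary, it has a dual-zero element $x$, so $xy = 0$ for some $y$ with $x, y \notin \delta(I) = \delta(\{0\})$. This same $x$ is then a dual-zero element of $\{0\}$. Spelled out directly, the failure of $\delta$-semiprimariness supplies $a, b \in R$ with $ab \in I$ but $a, b \notin \delta(I)$; weak $\delta$-semiprimariness excludes the case $ab \neq 0$, so $ab = 0$, which exhibits the dual-zero element of $\{0\}$ at once.

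For $(2) \Rightarrow (1)$, I would start from a dual-zero element $x$ of $\{0\}$, so $xy = 0$ for some $y$ with $x, y \notin \delta(\{0\}) = \delta(I)$. Since $0 \in I$, we have $xy = 0 \in I$ while neither $x$ nor $y$ lies in $\delta(I)$; this is precisely a witness that $I$ fails the $\delta$-semiprimary condition, so $I$ is not $\delta$-semiprimary.

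I expect no serious obstacle here: the argument is essentially a direct translation through the definitions. The only points needing care are to record explicitly that $\{0\}$ is weakly $\delta$-semiprimary (so that clause (2) is even meaningful) and to use the equality $\delta(I) = \delta(\{0\})$ to transport dual-zero elements back and forth between $I$ and $\{0\}$.
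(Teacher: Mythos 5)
Your proposal is correct and takes essentially the same route as the paper: both directions are handled by transporting dual-zero elements between $I$ and $\{0\}$ via the hypothesis $\delta(I)=\delta(\{0\})$, with weak $\delta$-semiprimariness forcing the witnessing product to be zero in the forward direction. If anything, your write-up is slightly more complete than the paper's, since you explicitly note that $\{0\}$ is vacuously weakly $\delta$-semiprimary (so statement (2) is meaningful) and you spell out why a dual-zero element of $I$ witnesses the failure of $\delta$-semiprimariness, a step the paper leaves implicit.
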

\begin{proof}
{\bf $(1) \Rightarrow (2)$}. Since $I$ is a weakly $\delta$-semiprimary ideal of $R$ that is not $\delta$-semiprimary, there are $x, y \in R$ such that $xy = 0$ and neither $x \in \delta(I)$ nor $y \in \delta(I)$. Since $\delta(I) = \delta(\{0\})$, we conclude that $x$ is a dual-zero element of $\{0\}$.

{\bf $(2)\Rightarrow (1)$}. Suppose that $x$ is a dual-zero element of $\{0\}$. Since $\delta(I) = \delta(\{0\})$, it is clear that $x$ is a dual-zero element of $I$.
\end{proof}
In view of Theorem \ref{r6}, we have the following result.
\begin{cor}\label{r7}
Let $I \subseteq \sqrt{\{0\}}$ be a proper ideal of $R$ such that $I$ is a weakly semiprimary ideal of $R$. The following statements are equivalent.
\begin{enumerate}
\item $I$ is not semiprimary.
\item $\{0\}$ has a dual-zero element of $R$.
\end{enumerate}
\end{cor}
\begin{proof} Since $\delta : I(R) \rightarrow I(R)$ such that $\delta(I) = \sqrt{I}$ for every  proper ideal $I$ of $R$ is an expansion function of ideals of $R$, we have $\delta(I) = \delta(\{0\})$. Thus the claim is clear by Theorem \ref{r6}.
\end{proof}

We show that the hypothesis "$\delta(I) = \delta(\{0\})$" in Theorem \ref{r6} is crucial, i.e. the following is an example of an ideal $I$ of a ring $R$ such that  $I \subseteq \sqrt{\{0\}}$ and $\{0\}$ has a dual-zero element of $R$ but $I$ is a $\delta$-semiprimary ideal of $R$ for some expansion function $\delta$ of ideals of $R$.
\begin{exm}\label{e3}
Let $R = Z_8$, $\delta : I(R) \rightarrow I(R)$ such that $\delta(I) = \sqrt{I}$ for every nonzero proper ideal $I$ of $R$, and $\delta(\{0\}) = \{0\}$. Let $I = 4R$. Then $\delta(I) = \sqrt{I} = 2R$. It is clear that $I$ is a $\delta$-semiprimary ideal of $R$ and $2$ is a dual-zero element of $\{0\}$.
\end{exm}

\begin{thm}\label{r11}
Let $\delta$ be an expansion function of ideals of a ring $R$ and $I$ be a weakly $\delta$-semiprimary ideal of $R$. If $J \subseteq I$ and $\delta(J) = \delta(I)$, then $J$ is a weakly $\delta$-semiprimary ideal of $R$.
\end{thm}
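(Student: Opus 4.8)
The plan is to verify the defining condition for $J$ directly, reducing it to the known condition for $I$ by the two hypotheses $J \subseteq I$ and $\delta(J) = \delta(I)$. First I would check that $J$ is a proper ideal: since $J \subseteq I$ and $I \neq R$, we immediately get $J \neq R$, so $J$ is proper and the definition of a weakly $\delta$-semiprimary ideal applies to it.

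For the main verification, I would take arbitrary elements $a, b \in R$ with $0 \neq ab \in J$. Because $J \subseteq I$, this gives $0 \neq ab \in I$. Now invoking the assumption that $I$ is a weakly $\delta$-semiprimary ideal of $R$, I obtain $a \in \delta(I)$ or $b \in \delta(I)$. At this point the hypothesis $\delta(J) = \delta(I)$ does all the remaining work: it lets me replace $\delta(I)$ by $\delta(J)$ in this disjunction, yielding $a \in \delta(J)$ or $b \in \delta(J)$, which is exactly what the definition of a weakly $\delta$-semiprimary ideal of $R$ requires of $J$.

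There is essentially no obstacle here; the argument is a two-step containment chase. The only subtlety worth flagging is that one must use $\delta(J) = \delta(I)$ in the direction $\delta(I) \subseteq \delta(J)$ (the containment $\delta(J) \subseteq \delta(I)$ is already forced by $J \subseteq I$ and the monotonicity built into the expansion function, so it is the reverse containment supplied by the equality hypothesis that is actually needed). I would not expect to need Theorem \ref{r2} or Theorem \ref{r3}, nor any structural facts about dual-zero elements; the statement follows purely formally from monotonicity of $\delta$ and the two given hypotheses.
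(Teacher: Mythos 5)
Your proof is correct and follows exactly the paper's argument: pass from $0 \neq ab \in J$ to $0 \neq ab \in I$ via $J \subseteq I$, apply the weakly $\delta$-semiprimary property of $I$, and transfer the conclusion back using $\delta(J) = \delta(I)$. The remarks on properness of $J$ and on which direction of the equality $\delta(J) = \delta(I)$ is actually needed are accurate refinements the paper leaves implicit.
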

\begin{proof}
Suppose that $0\not = xy \in J$ for some $x, y \in R$. Since $J \subseteq I$, we have $0 \not = xy \in I$. Since $I$ is a weakly $\delta$-semiprimary ideal of $R$, we conclude that $x \in \delta(I)$ or $y \in \delta(I)$. Since $\delta(I) = \delta(J)$, we conclude that $x \in \delta(J)$ or $y \in \delta(J)$. Thus $J$ is a weakly $\delta$-semiprimary ideal of $R$.
\end{proof}
 In view of Theorem \ref{r11}, we have the following result.
 \begin{cor}\label{r12}
 Let $I$ be a weakly semiprimary ideal of $R$ such that $I \subseteq \sqrt{\{0\}}$. If $J \subseteq I$, then $J$ is a weakly semiprimary ideal of $R$. In particular, if $L$ is an ideal of $R$, then $LI$ and $L\cap I$ are weakly semiprimary ideals of $R$. Furthermore, if $n \geq 1$ is a positive integer, then $I^n$ is a weakly semiprimary ideal of $R$.
 \end{cor}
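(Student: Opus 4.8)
The plan is to deduce the entire statement from Theorem \ref{r11} applied to the radical expansion function $\delta(A) = \sqrt{A}$, for which ``weakly $\delta$-semiprimary'' is precisely ``weakly semiprimary''. Under this $\delta$, the only hypothesis needed to invoke Theorem \ref{r11} for an ideal $J \subseteq I$ is $\sqrt{J} = \sqrt{I}$, so the whole corollary collapses to a single radical identity.

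First I would record the key computation: if $A$ is any ideal with $\{0\} \subseteq A \subseteq \sqrt{\{0\}}$, then $\sqrt{A} = \sqrt{\{0\}}$. Indeed, taking radicals of these inclusions and using idempotency of the radical gives $\sqrt{\{0\}} \subseteq \sqrt{A} \subseteq \sqrt{\sqrt{\{0\}}} = \sqrt{\{0\}}$. Applying this to both $A = I$ and $A = J$ (both lie between $\{0\}$ and $\sqrt{\{0\}}$, since $J \subseteq I \subseteq \sqrt{\{0\}}$) yields $\sqrt{J} = \sqrt{\{0\}} = \sqrt{I}$, that is, $\delta(J) = \delta(I)$. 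Since moreover $J \subseteq I$, Theorem \ref{r11} immediately gives that $J$ is weakly $\delta$-semiprimary, i.e.\ weakly semiprimary. Properness of $J$ is automatic: $J \subseteq \sqrt{\{0\}} \subsetneq R$, because $1$ is not nilpotent when $1 \neq 0$.

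For the ``in particular'' clause, I would simply observe that each of $LI$, $L \cap I$, and $I^n$ (for $n \geq 1$) is an ideal contained in $I$: indeed $LI \subseteq RI = I$, $L \cap I \subseteq I$, and $I^n \subseteq I^1 = I$. Hence each is an ideal $J$ with $J \subseteq I \subseteq \sqrt{\{0\}}$, so the first part of the corollary applies verbatim and shows that all three are weakly semiprimary.

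I do not anticipate a genuine obstacle here; the content lies entirely in noticing that the condition $I \subseteq \sqrt{\{0\}}$ forces $\sqrt{J} = \sqrt{I}$ for every subideal $J$, which is exactly the equality $\delta(J) = \delta(I)$ demanded by Theorem \ref{r11}. The only steps requiring a moment's care are the idempotency fact $\sqrt{\sqrt{\{0\}}} = \sqrt{\{0\}}$ and the verification that the constructions $LI$, $L \cap I$, and $I^n$ genuinely sit inside $I$.
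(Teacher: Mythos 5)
Your proposal is correct and matches the paper's intended argument exactly: the paper states this corollary ``in view of Theorem \ref{r11}'' with no written proof, and the details it leaves implicit are precisely the ones you supply, namely that $J \subseteq I \subseteq \sqrt{\{0\}}$ forces $\sqrt{J} = \sqrt{I} = \sqrt{\{0\}}$ (so Theorem \ref{r11} applies with $\delta = \sqrt{\cdot}\,$), and that $LI$, $L \cap I$, and $I^n$ are all ideals contained in $I$. Your additional checks (properness of $J$ via non-nilpotency of $1$, idempotency of the radical) are sound and make the deduction complete.
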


 \begin{thm}\label{r12.1}
 Let $ {I_{i}},_{i\in J} $ be a collection of weakly semiprimary ideals of a ring $R$ that are not semiprimary. Then $ I=\bigcap I_{i} $ is a weakly semiprimary ideal of $R$.
\end{thm}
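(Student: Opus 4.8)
The plan is to reduce the statement to the single-ideal result already established in Corollary \ref{r12}, using the strong structural constraint that Corollary \ref{r4} imposes on weakly semiprimary ideals that fail to be semiprimary. The key realization is that the hypothesis ``that are not semiprimary'' is present precisely so that each $I_i$ can be forced inside the nilradical.

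First I would observe that the collection is nonempty (otherwise $\bigcap_{i\in J} I_i = R$, which is not proper), so I may fix some index $i_0 \in J$. Since $I_{i_0}$ is a weakly semiprimary ideal of $R$ that is not semiprimary, Corollary \ref{r4} applies directly and yields $I_{i_0}^2 = \{0\}$, and hence $I_{i_0} \subseteq \sqrt{\{0\}}$. In fact the same holds for every member of the collection, so all of them, and therefore their intersection, lie in $\sqrt{\{0\}}$.

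Next, because $I = \bigcap_{i\in J} I_i \subseteq I_{i_0}$, I now have a single weakly semiprimary ideal $I_{i_0}$ that is contained in $\sqrt{\{0\}}$, together with a subideal $I$ of it. This is exactly the hypothesis of Corollary \ref{r12}: if an ideal is weakly semiprimary and contained in $\sqrt{\{0\}}$, then every ideal contained in it is again weakly semiprimary. Applying Corollary \ref{r12} with the subideal taken to be $I \subseteq I_{i_0}$ gives that $I$ is a weakly semiprimary ideal of $R$; note also that $I$ is proper, since $I \subseteq \sqrt{\{0\}} \neq R$ (as $1$ is not nilpotent in a ring with $1 \neq 0$).

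There is essentially no analytic obstacle in this argument; the only genuine content is recognizing that the ``not semiprimary'' hypothesis is exactly what licenses the invocation of Corollary \ref{r4} to place each $I_i$ inside the nilradical, after which a single appeal to Corollary \ref{r12} closes the argument. The one minor point worth flagging is the nonemptiness of the index set $J$, which I noted above in order to guarantee that the intersection is a proper ideal and that a base index $i_0$ exists.
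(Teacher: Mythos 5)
Your proof is correct and follows essentially the same route as the paper: the paper also uses Theorem \ref{r3} (your Corollary \ref{r4}) to place the ideals inside $\sqrt{\{0\}}$, concludes $\sqrt{I} = \sqrt{I_1} = \sqrt{\{0\}}$, and then invokes the subideal result (Theorem \ref{r11}/Corollary \ref{r12}) exactly as you do. Your write-up is in fact slightly more careful than the paper's, since you flag the nonemptiness of the index set and the properness of the intersection.
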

\begin{proof} Note that $\sqrt{I=\bigcap I_{i}} = \sqrt{I_1} = \sqrt{\{0\}}$ by Theorem \ref{r3}. Hence the result follows.
\end{proof}

If $I, J$ are weakly semiprimary ideals of a ring $R$ such that $\sqrt{I} = \sqrt{J}$ and $I + J \not = R$, then $I + J$  need not be a weakly semiprimary ideal of $R$. We have the following example.

\begin{exm}\label{e4}
Let $A = \mathbb{Z}_2[T, U, X, Y]$, $H = (T^2, U^2, XY + T + U, TU, TX, TY, UX, UY)A$ be an ideal of $A$, and $R = A/H$. Then by construction of $R$, $I = (TA + H)/H = \{0, T + H\}$ and $J = (UA + H)/H = \{0, U + H\}$ are weakly semiprimary ideals of $R$ such that $|I| = |J| = 2$ and $\sqrt{I} = \sqrt{J} = \sqrt{\{0\}}$ (in R) = $(T, U, XY)A/ H$. Let  $L = I + J = (H + (T, U)A)/H$. Then  $\sqrt{L} = \sqrt{\{0\}}$ (in $R$) and $L$  is not a weakly  semiprimary ideal of $R$. For $0 \not = X + H \cdot Y + H = XY + H$, $X + H \not \in L$, and  $Y + H  \not \in \sqrt{L}$.
\end{exm}

 \begin{thm}\label{r13}
  Let $\delta$ be an expansion function of ideals of $R$ such that $\delta(\{0\})$ is a $\delta$-semiprimary ideal of $R$  and $\delta(\delta(\{0\}))=\delta(\{0\})$. Then
  \begin{enumerate}
  \item $\delta(\{0\})$ is a prime ideal of $R$.
    \item Suppose that  $I$ be a weakly $\delta$-semiprimary ideal of $R$. Then $I$ is a $\delta$-semiprimary ideal of $R$.
      \end{enumerate}
\end{thm}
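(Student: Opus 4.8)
The plan is to treat the two parts in sequence, with part (1) serving as the engine for part (2). Write $P = \delta(\{0\})$ throughout.

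For part (1), the key observation is that the hypothesis $\delta(\delta(\{0\})) = \delta(\{0\})$ says exactly $\delta(P) = P$. Since $P$ is assumed to be a $\delta$-semiprimary ideal, it is in particular proper. Now suppose $ab \in P$ for some $a, b \in R$. Applying the $\delta$-semiprimary condition to $P$ gives $a \in \delta(P)$ or $b \in \delta(P)$; but $\delta(P) = P$, so $a \in P$ or $b \in P$. This is precisely the definition of a prime ideal, so part (1) follows with essentially no computation — it is just the observation that a $\delta$-semiprimary ideal fixed by $\delta$ must be prime.

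For part (2), I would argue by contradiction. Suppose $I$ is weakly $\delta$-semiprimary but not $\delta$-semiprimary. By the Remark preceding Theorem \ref{r2}, $I$ then has a dual-zero element: there exist $x, y \in R$ with $xy = 0$, $x \notin \delta(I)$, and $y \notin \delta(I)$. The crucial structural fact is that $\{0\} \subseteq I$, so by the monotonicity half of the expansion-function axiom we get $P = \delta(\{0\}) \subseteq \delta(I)$. Now $xy = 0 \in P$, and $P$ is prime by part (1), so $x \in P$ or $y \in P$; combined with $P \subseteq \delta(I)$ this forces $x \in \delta(I)$ or $y \in \delta(I)$, contradicting the choice of $x, y$ as a dual-zero pair. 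Hence $I$ has no dual-zero element and must be $\delta$-semiprimary.

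The main thing to get right is recognizing that part (1) is doing all the work, and that the correct containment to exploit in part (2) is $\delta(\{0\}) \subseteq \delta(I)$ (not the reverse), which comes for free from $\{0\} \subseteq I$ together with monotonicity of $\delta$. One could alternatively route part (2) through Theorem \ref{r3} (deducing $I^2 = \{0\}$, then $I \subseteq P$ since $P$ is prime, then $\delta(I) = \delta(\{0\})$) and invoke Theorem \ref{r6}, but the direct dual-zero argument is shorter and avoids having to establish the exact equality $\delta(I) = \delta(\{0\})$. I do not anticipate any serious obstacle; the only subtlety is to keep in mind that ``dual-zero element'' is defined only for a weakly $\delta$-semiprimary ideal, which is exactly the standing hypothesis on $I$.
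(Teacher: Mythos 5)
Your proof is correct, and part (1) matches the paper's argument exactly. For part (2), however, you take a genuinely different and in fact shorter route. The paper argues by contradiction through Theorem \ref{r3}: if $I$ is not $\delta$-semiprimary then $I^2 = \{0\}$, so primality of $P = \delta(\{0\})$ gives $I \subseteq P$, whence $\delta(I) \subseteq \delta(\delta(\{0\})) = \delta(\{0\}) \subseteq \delta(I)$, so $\delta(I) = P$ is prime; it then closes by observing that any ideal whose $\delta$-expansion is prime is automatically $\delta$-semiprimary. You instead work directly with a dual-zero pair $x, y$ (whose existence is the Remark preceding Theorem \ref{r2}): since $xy = 0 \in P$ and $P$ is prime, $x \in P$ or $y \in P$, and $P \subseteq \delta(I)$ by monotonicity, contradicting $x, y \notin \delta(I)$. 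Your version avoids Theorem \ref{r3} entirely, needs the idempotence hypothesis $\delta(\delta(\{0\})) = \delta(\{0\})$ only through part (1), and uses only the easy containment $\delta(\{0\}) \subseteq \delta(I)$ rather than the exact equality. What the paper's longer route buys is precisely that equality $\delta(I) = \delta(\{0\})$, which is the kind of structural information exploited again in Theorem \ref{r14}; your route buys brevity and fewer dependencies. Both are sound.
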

\begin{proof}
\begin{enumerate}
\item Suppose that $ab \in \delta(\{0\})$ for some $a, b \in R$. Suppose that $a \notin \delta(\delta(\{0\})) = \delta(\{0\})$. Since $\delta(\{0\})$ is a $\delta$-semiprimary ideal of $R$ and $a\notin \delta(\delta(\{0\}))$, we have $b \in \delta(\delta(\{0\})) = \delta(\{0\})$. Thus $\delta(\{0\})$ is a prime ideal of $R$.
  \item Suppose that $I$ is not $\delta$-semiprimary. Clearly, $\delta(\{0\}) \subseteq \delta(I)$. Since  $I^{2} = \{0\}$ by Theorem \ref{r3} and $\delta(\{0\})$ is a prime ideal of $R$, we have $I \subseteq \delta(\{0\})$. Since $\delta(\delta(\{0\})) = \delta(\{0\})$, we have  $\delta(I) \subseteq \delta(\delta(\{0\})) = \delta(\{0\})$. Since $\delta(\{0\}) \subseteq \delta(I)$ and $\delta(I) \subseteq \delta(\{0\})$, we have $\delta(I) = \delta(\{0\})$  is a prime ideal of $R$. Since $\delta(I)$ is prime, $I$ is a $\delta$-semiprimary ideal of $R$, which is a contradiction.
  \end{enumerate}
\end{proof}
\begin{thm}\label{r14}
  Let $\delta$ be an expansion function of ideals of $R$ such that $\delta(\{0\})$ is a weakly $\delta$-semiprimary ideal of $R$, $\sqrt{\{0\}} \subseteq \delta(\{0\})$, and $\delta(\delta(\{0\}))=\delta(\{0\})$. Then
  \begin{enumerate}
  \item $\delta(\{0\})$ is a weakly prime ideal of $R$.
    \item Suppose that  $I$ is a weakly $\delta$-semiprimary ideal of $R$ that is not $\delta$-semiprimary. Then $\delta(I) = \delta(\{0\}) = \delta(\sqrt{\{0\}})$ is a weakly prime ideal of $R$ that is not prime. Furthermore, if $J \subseteq \sqrt{\{0\}}$, then $J$ is a weakly $\delta$-semiprimary ideal of $R$ that is not $\delta$-semiprimary and $\delta(J) = \delta(\{0\})$.
      \end{enumerate}
\end{thm}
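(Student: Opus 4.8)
The plan is to mirror the argument of Theorem \ref{r13}, replacing its prime and semiprimary conclusions by their weak analogues and tracking carefully where nonzero products are required. For brevity, write $P = \delta(\{0\})$, so the hypotheses read: $P$ is weakly $\delta$-semiprimary, $\sqrt{\{0\}} \subseteq P$, and $\delta(P) = \delta(\delta(\{0\})) = \delta(\{0\}) = P$.

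For part (1), I would argue straight from the definitions. Suppose $0 \neq ab \in P$ for some $a, b \in R$. Since $P$ is weakly $\delta$-semiprimary, this forces $a \in \delta(P)$ or $b \in \delta(P)$; but $\delta(P) = \delta(\delta(\{0\})) = \delta(\{0\}) = P$ by hypothesis, so $a \in P$ or $b \in P$. Hence $P$ is weakly prime. This step is immediate.

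For part (2), the first task is to pin down $\delta(I)$. Since $I$ is weakly $\delta$-semiprimary but not $\delta$-semiprimary, Theorem \ref{r3} gives $I^2 = \{0\}$ and $I \subseteq \sqrt{\{0\}}$; combined with $\sqrt{\{0\}} \subseteq \delta(\{0\}) = P$ this yields $I \subseteq P$. Monotonicity of $\delta$ then gives the two inclusions $P = \delta(\{0\}) \subseteq \delta(I)$ (from $\{0\} \subseteq I$) and $\delta(I) \subseteq \delta(P) = \delta(\delta(\{0\})) = P$ (from $I \subseteq P$), so $\delta(I) = P$. The identical squeeze applied to $\{0\} \subseteq \sqrt{\{0\}} \subseteq P$ gives $\delta(\sqrt{\{0\}}) = P$, establishing the chain $\delta(I) = \delta(\{0\}) = \delta(\sqrt{\{0\}})$, which is weakly prime by part (1). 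To see it is not prime, I would extract a dual-zero element: because $I$ fails to be $\delta$-semiprimary, there exist $x, y \in R$ with $xy = 0 \in I$ yet $x, y \notin \delta(I) = P$; thus $xy = 0 \in P$ with neither factor in $P$, so $P$ is not prime. The Furthermore clause then reuses these facts: for $J \subseteq \sqrt{\{0\}}$, the same squeeze $\{0\} \subseteq J \subseteq \sqrt{\{0\}} \subseteq P$ gives $\delta(J) = P$, which is weakly prime, so Theorem \ref{r5} yields that $J$ is weakly $\delta$-semiprimary; and the pair $x, y$ above again satisfies $xy = 0 \in J$ with $x, y \notin P = \delta(J)$, so $J$ is not $\delta$-semiprimary.

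I expect no serious obstacle. The only points requiring care are the repeated combined use of monotonicity and the idempotency hypothesis $\delta(\delta(\{0\})) = \delta(\{0\})$ to close the two-sided squeeze for $\delta(I)$ and $\delta(J)$, and the observation that a single dual-zero element simultaneously certifies both that $P$ is not prime and that every $J \subseteq \sqrt{\{0\}}$ fails to be $\delta$-semiprimary.
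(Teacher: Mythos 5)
Your proof is correct and follows essentially the same route as the paper's: part (1) is the identical definitional argument, and part (2) uses the same monotonicity-plus-idempotency squeeze to get $\delta(I)=\delta(J)=\delta(\{0\})=\delta(\sqrt{\{0\}})$ and then reduces everything to part (1). The only differences are cosmetic and in your favor: you invoke Theorem \ref{r5} where the paper implicitly relies on Theorem \ref{r11}, and your explicit dual-zero pair $x,y$ with $xy=0$ cleanly justifies both the ``not prime'' and ``not $\delta$-semiprimary'' claims, which the paper asserts more tersely without spelling out the witness.
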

\begin{proof}
\begin{enumerate}
\item Suppose that $0 \not =  ab \in \delta(\{0\})$ for some $a, b \in R$. Suppose that $a \notin \delta(\delta(\{0\})) = \delta(\{0\})$. Since $\delta(\{0\})$ is a weakly $\delta$-semiprimary ideal of $R$ and $a\notin \delta(\delta(\{0\}))$, we have $b \in \delta(\delta(\{0\})) = \delta(\{0\})$. Thus $\delta(\{0\})$ is a weakly prime ideal of $R$.
  \item Suppose that $I$ is not $\delta$-semiprimary. Then $I^2 = \{0\}$ by Theorem \ref{r3} and hence $I \subseteq \sqrt{\{0\}}$. Let $J$ be an ideal of $R$ such that $J \subseteq \sqrt{\{0\}}$. Since $\sqrt{\{0\}} \subseteq \delta(\{0\})$, we have $J \subseteq \delta(\{0\})$. Hence $\delta(J) \subseteq \delta(\delta(\{0\})) = \delta(\{0\})$. Since $\delta(\{0\}) \subseteq \delta(J)$ and $\delta(J) \subseteq \delta(\{0\})$, we conclude that $\delta(J) = \delta(\{0\})$. In particular, $\delta(I) = \delta(\{0\}) = \delta(\sqrt{\{0\}})$  is a weakly prime ideal of $R$. Since $\delta(\{0\})$ is a weakly $\delta$-semiprimary of $R$  and $\delta(J) = \delta(\{0\})$, we conclude that $J$ is a weakly $\delta$-semiprimary ideal of $R$. Since $I$ is not $\delta$-semiprimary, we conclude that  $\delta(I) = \delta(\{0\})$ is not a prime ideal of $R$. Since $\delta(J) = \delta(\{0\})$ is a weakly prime ideal of $R$ that is not prime, we conclude that $J$ is a weakly $\delta$-semiprimary ideal of $R$ that is not $\delta$-semiprimary.
  \end{enumerate}
\end{proof}
\section{Weakly $\delta$-semiprimary ideals under localization and ring-homomorphism}
For a ring $R$, let $Z(R)$ be the set of all zerodivisors of $R$.
\begin{thm}\label{s3.1} Let $S$ be a multiplicatively closed subset of $R$ such that $S \cap Z(R) = \emptyset$. If $I$ is a weakly semiprimary ideal of $R$ and $S \cap \sqrt{I} = \emptyset$, then $I_S$  is a weakly semiprimary ideal of $R_S$.
\end{thm}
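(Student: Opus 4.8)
The plan is to verify the defining property of a weakly semiprimary ideal directly for $I_S$, transferring any relevant product back to $R$ through the localization map and exploiting the hypothesis $S \cap Z(R) = \emptyset$ to keep nonzero products nonzero. First I would record two preliminary facts. Since $I \subseteq \sqrt{I}$ and $S \cap \sqrt{I} = \emptyset$, we have $S \cap I = \emptyset$, so $I_S$ is a proper ideal of $R_S$. Second, I would invoke the standard identity $\sqrt{I_S} = (\sqrt{I})_S$, so that membership in the radical of $I_S$ can always be checked at the level of $\sqrt{I}$ in $R$.

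Next, suppose $0 \neq (a/s)(b/t) \in I_S$ for some $a, b \in R$ and $s, t \in S$; the goal is to show $a/s \in \sqrt{I_S}$ or $b/t \in \sqrt{I_S}$. From $ab/(st) \in I_S$ I would produce an element $w \in S$ with $w\,ab \in I$: writing $ab/(st) = i/w$ with $i \in I$ and $w \in S$, the equality of fractions gives $v(ab\,w - i\,st) = 0$ for some $v \in S$, and since $v$ is a non-zerodivisor this forces $ab\,w = i\,st \in I$, that is, $w\,ab \in I$.

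The crucial use of the hypothesis comes in checking that this product is nonzero in $R$. Because $S \cap Z(R) = \emptyset$, the relation $x/u = 0$ in $R_S$ forces $x = 0$ in $R$; hence $0 \neq ab/(st)$ yields $ab \neq 0$, and then $w\,ab \neq 0$ because $w$ is a non-zerodivisor. Thus $0 \neq (wa)\,b \in I$, and the weakly semiprimary property of $I$ gives $wa \in \sqrt{I}$ or $b \in \sqrt{I}$. Finally I would pull each case back to $R_S$: if $b \in \sqrt{I}$, then $b/t \in (\sqrt{I})_S = \sqrt{I_S}$; if $wa \in \sqrt{I}$, then $a/s = (wa)/(ws)$ with $wa \in \sqrt{I}$ and $ws \in S$, so $a/s \in (\sqrt{I})_S = \sqrt{I_S}$. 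Either way the weakly semiprimary condition holds for $I_S$.

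The main obstacle is the nonzero bookkeeping when descending from $R_S$ to $R$: one must guarantee that the product that is nonzero in $R_S$ remains nonzero after multiplying by the auxiliary element $w$ of $S$, since only then can the \emph{weakly} semiprimary hypothesis on $I$ be applied. This is exactly what $S \cap Z(R) = \emptyset$ secures; without it an element of $S$ could annihilate a nonzero product, the descended product could vanish, and the hypothesis on $I$ would be unavailable.
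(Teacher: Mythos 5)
Your proof is correct and, at the level of scaffolding, matches the paper's: both clear denominators, use $S \cap Z(R) = \emptyset$ to land a nonzero product in $I$, apply the weakly semiprimary hypothesis in $R$, and return to $R_S$ through $\sqrt{I_S} = (\sqrt{I})_S$. The difference lies in where the hypothesis is applied, and it matters. The paper factors the cleared product as $u\cdot(ab)$, uses $u \in S \Rightarrow u \notin \sqrt{I}$ to conclude $0 \neq ab \in \sqrt{I}$, and then asserts that, since $I$ is weakly semiprimary, $a \in \sqrt{I}$ or $b \in \sqrt{I}$; but the definition only governs nonzero products lying in $I$, and at that point $ab$ is only known to lie in $\sqrt{I}$. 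As a general inference this pattern is even refuted by the paper's own Example \ref{e2} ($I = \{0\}$ in $\mathbb{Z}_{12}$, where $0 \neq 2\cdot 3 \in \sqrt{I}$ yet neither factor is in $\sqrt{I}$), so closing that step honestly requires extra work, e.g.\ raising $ab$ to a power to get inside $I$ and disposing of the nilpotent case via the non-zerodivisor property of $S$. Your grouping $(wa)\cdot b$ applies the definition once to a product that genuinely lies in $I$, and---this is the key trick---in the case $wa \in \sqrt{I}$ you never attempt to cancel $w$: you simply absorb it into the denominator, $a/s = (wa)/(ws) \in (\sqrt{I})_S = \sqrt{I_S}$. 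So your route buys rigor at no extra cost; it sidesteps the paper's questionable second application of the definition, and you also record the properness of $I_S$ (needed for $I_S$ to qualify as weakly semiprimary at all), which the paper leaves tacit.
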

\begin{proof}
Since $S \cap \sqrt{I} = \emptyset$, we conclude that $\sqrt{I_S} = (\sqrt{I})_S$.
Let $a,b\in R$, $s,t\in S$ such that $0\not=\frac{a}{s}\frac{b}{t} \in I_S$. Then there exists $u\in S$ such that $0\not=uab\in
I$. Since $u \in S$ and $S \cap \sqrt{I} = \emptyset$, we conclude that $0 \not = ab \in \sqrt{I}$. Since $I$ is a weakly semiprimary ideal of $R$, we conclude that $a \in \sqrt{I}$ or $b \in \sqrt{I}$. Thus $\frac{a}{s} \in \sqrt{I_S}$ or $\frac{b}{t} \in \sqrt{I_S}$.   Thus $I_S$ is a weakly semiprimary ideal of $R_S$.
\end{proof}

\begin{thm}\label{s3.2}
   Let $\gamma$ be an expansion function of ideals of $R$ and let $ I, J $ be proper ideals of $R$ with $ I \subseteq J $. Let $\delta: I(\frac{R}{I}) \rightarrow I(\frac{R}{I})$ be an expansion  function of ideals of $S = \frac{R}{I}$ such that $\delta(\frac{L + I}{I}) = \frac{\gamma(L + I)}{I}$ for every $L \in I(R)$. Then the followings statements hold.
  \begin{enumerate}
      \item If $J$ is a weakly $\gamma$-semiprimary ideal of $R$, then $\frac{J}{I}$ is a weakly $\delta$-semiprimary ideal of $S$.
       \item If $I$ is a weakly $\gamma$-semiprimary ideal of $R$ and $\frac{J}{I}$ is a weakly $\delta$-semiprimary ideal of $S$, then $J$ is a a weakly $\gamma$-semiprimary ideal of $R$.
  \end{enumerate}
\end{thm}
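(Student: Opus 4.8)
The plan is to exploit the correspondence hypothesis $\delta((L+I)/I) = \gamma(L+I)/I$ in the sharpened form $\delta(K/I) = \gamma(K)/I$ for any ideal $K$ of $R$ with $I \subseteq K$ (take $L = K$, so that $L + I = K$); in particular $\delta(J/I) = \gamma(J)/I$. Two elementary observations will be used throughout. First, since $\gamma$ is an expansion function and $I \subseteq J \subseteq \gamma(J)$, the ideal $\gamma(J)/I$ of $S = R/I$ is well defined and $\overline{a} = a + I$ lies in $\gamma(J)/I$ if and only if $a \in \gamma(J) + I = \gamma(J)$. Second, for $a, b \in R$ the product $\overline{a}\,\overline{b} = ab + I$ is nonzero in $S$ precisely when $ab \notin I$, and since $0 \in I$ this already forces $ab \neq 0$ in $R$.

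For part (1), I would start with $\overline{a}, \overline{b} \in S$ satisfying $0 \neq \overline{a}\,\overline{b} \in J/I$. Then $ab \in J$, while $ab \notin I$ gives $0 \neq ab$ by the second observation, so $0 \neq ab \in J$. Applying the hypothesis that $J$ is weakly $\gamma$-semiprimary yields $a \in \gamma(J)$ or $b \in \gamma(J)$, and the first observation pushes this down to $\overline{a} \in \gamma(J)/I = \delta(J/I)$ or $\overline{b} \in \delta(J/I)$, which is exactly what is required.

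For part (2), I would take $a, b \in R$ with $0 \neq ab \in J$ and split according to whether the product survives in the quotient. If $ab \notin I$, then $0 \neq \overline{a}\,\overline{b} \in J/I$, and since $J/I$ is weakly $\delta$-semiprimary we obtain $\overline{a} \in \delta(J/I) = \gamma(J)/I$ or $\overline{b} \in \gamma(J)/I$, whence $a \in \gamma(J)$ or $b \in \gamma(J)$ by the first observation. The remaining and crucial case is $ab \in I$: here the product vanishes in $S$, so the hypothesis on $J/I$ supplies no information, and this is precisely where the assumption that $I$ is weakly $\gamma$-semiprimary enters. Since $0 \neq ab \in I$, it gives $a \in \gamma(I)$ or $b \in \gamma(I)$, and monotonicity of $\gamma$ with $I \subseteq J$ yields $\gamma(I) \subseteq \gamma(J)$, so again $a \in \gamma(J)$ or $b \in \gamma(J)$. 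Combining the two cases shows $J$ is weakly $\gamma$-semiprimary.

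I expect the only genuine subtlety to be the case distinction in part (2): passing to the quotient loses exactly those products that land in $I$, and the hypothesis that $I$ is weakly $\gamma$-semiprimary is tailored to recover the conclusion for these products through the containment $\gamma(I) \subseteq \gamma(J)$. Everything else is bookkeeping with the identity $\delta(K/I) = \gamma(K)/I$ and the expansion properties of $\gamma$.
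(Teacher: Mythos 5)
Your proof is correct and follows essentially the same route as the paper's: part (1) translates a nonzero product in $J/I$ (equivalently $ab \in J\setminus I$, hence $ab \neq 0$) back to $R$ and applies the hypothesis on $J$, while part (2) splits into the cases $ab \in I$ (using that $I$ is weakly $\gamma$-semiprimary together with $\gamma(I) \subseteq \gamma(J)$) and $ab \in J \setminus I$ (using that $J/I$ is weakly $\delta$-semiprimary), exactly as in the paper. The bookkeeping via $\delta(J/I) = \gamma(J)/I$ and $I \subseteq J \subseteq \gamma(J)$ also matches the paper's opening observation.
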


\begin{proof} First observe that since $I \subseteq J$, we have $I \subseteq J \subseteq \gamma(J)$ and $\delta(\frac{J}{I}) = \frac{\gamma(J)}{I}$.
     \begin{enumerate}
    \item Assume that $ab \in J\setminus I$ for some $a, b \in R$. Then $0 \not = ab \in J$. Hence $a \in \gamma(J)$ or $b \in \gamma(J)$. Thus $a + I \in \frac{\gamma(J)}{I}$ or $b + I \in \frac{\gamma(J)}{I}$. Thus $\frac{J}{I}$ is a weakly $\delta$-semiprimary ideal of $S = \frac{R}{I}$.
        \item Since $I \subseteq J$, we have $\gamma(I) \subseteq \gamma(J)$. Assume that $0 \not = ab \in J$ for some $a, b \in R$. Assume $ab \in I$. Since $I$ is a weakly $\gamma$-semiprimary ideal of $R$, we have $a \in \gamma(I) \subseteq \gamma(J)$ or $b \in \gamma(I) \subseteq \gamma(J)$. Assume that $ab \in J\setminus I$. Hence $I \not = ab + I \in \frac{J}{I}$. Since $\frac{J}{I}$ is a weakly $\delta$-semiprimary ideal of $S$, we have $a + I \in \frac{\gamma(J)}{I}$ or  $b + I \in \frac{\gamma(J)}{I}$. Thus $a\in \gamma(J)$ or $b \in \gamma(J)$. Thus $J$ is a a weakly $\gamma$-semiprimary ideal of $R$.
\end{enumerate}
\end{proof}

In view of Theorem \ref{s3.2}, we have the following result.

\begin{cor}\label{s3.3}
   Let $ I, J $ be proper ideals of $R$ with $ I \subseteq J $. Then the followings statements hold.
  \begin{enumerate}
      \item If $J$ is a weakly semiprimary ideal of $R$, then $\frac{J}{I}$ is a weakly semiprimary ideal of $\frac{R}{I}$.
       \item If $I$ is a weakly semiprimary ideal of $R$ and $\frac{J}{I}$ is a weakly semiprimary ideal of $\frac{R}{I}$, then $J$ is a a weakly semiprimary ideal of $R$.
  \end{enumerate}
\end{cor}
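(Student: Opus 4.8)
The plan is to obtain Corollary~\ref{s3.3} as a direct specialization of Theorem~\ref{s3.2}, taking both expansion functions to be the radical. Concretely, I would set $\gamma(L) = \sqrt{L}$ for every ideal $L$ of $R$ and $\delta(K) = \sqrt{K}$ for every ideal $K$ of $S = \frac{R}{I}$; each of these is an expansion function by part~(2) of the Example (applied to $R$ and to $S$, respectively). With these choices, a proper ideal of $R$ is weakly $\gamma$-semiprimary precisely when it is weakly semiprimary in $R$, and a proper ideal of $S$ is weakly $\delta$-semiprimary precisely when it is weakly semiprimary in $S = \frac{R}{I}$, so the two statements of Corollary~\ref{s3.3} become exactly the two statements of Theorem~\ref{s3.2} under this substitution.

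The only hypothesis of Theorem~\ref{s3.2} that is not automatic is the compatibility condition $\delta\!\left(\frac{L+I}{I}\right) = \frac{\gamma(L+I)}{I}$ for every $L \in I(R)$, which here reads $\sqrt{\frac{L+I}{I}} = \frac{\sqrt{L+I}}{I}$. I would verify this by the standard correspondence between radicals in $R$ and in $\frac{R}{I}$: since $I \subseteq L + I$, an element $x + I$ lies in $\sqrt{\frac{L+I}{I}}$ if and only if $(x+I)^n = x^n + I \in \frac{L+I}{I}$ for some integer $n \geq 1$, i.e. $x^n \in L + I$, i.e. $x \in \sqrt{L+I}$, i.e. $x + I \in \frac{\sqrt{L+I}}{I}$. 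Thus the required identity holds for every $L \in I(R)$.

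Once the compatibility condition is confirmed, parts~(1) and~(2) of the corollary follow immediately from parts~(1) and~(2) of Theorem~\ref{s3.2}: statement~(1) is the case where $J$ is weakly semiprimary in $R$, and statement~(2) is the case where $I$ is weakly semiprimary in $R$ and $\frac{J}{I}$ is weakly semiprimary in $\frac{R}{I}$. The main (indeed, essentially the only) point requiring argument is the radical--quotient identity above; everything else is bookkeeping of the translation between the radical expansion function and ordinary (weak) semiprimariness, so I do not anticipate any genuine obstacle beyond this routine verification.
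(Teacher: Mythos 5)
Your proposal is correct and is exactly the route the paper intends: the paper derives Corollary~\ref{s3.3} by specializing Theorem~\ref{s3.2} to the radical expansion functions, which is precisely your argument. Your explicit verification of the compatibility identity $\sqrt{\frac{L+I}{I}} = \frac{\sqrt{L+I}}{I}$ is the one detail the paper leaves unstated, and it is carried out correctly.
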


\begin{thm}\label{s3.4}
Let $R$, $S$ be rings and $f: R \rightarrow S$ be a surjective ring-homomorphism. Then
\begin{enumerate}
\item If $I$ is a weakly semiprimary ideal of $R$ and $kernel(f) \subseteq I$, then $f(I)$ is a weakly semiprimary ideal of $S$.
\item If $J$ is a weakly semiprimary ideal of $S$ and $kernel(f)$ is a weakly semiprimary ideal of $R$, then $f^{-1}(J)$ is a weakly semiprimary ideal of $R$.
\end{enumerate}
\end{thm}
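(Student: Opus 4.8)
The plan is to handle the two parts separately, in each case relying on two elementary facts about how radicals interact with a ring homomorphism $f\colon R \to S$: first, $f(\sqrt{I}) \subseteq \sqrt{f(I)}$ for every ideal $I$ of $R$ (since $r^n \in I$ forces $f(r)^n = f(r^n) \in f(I)$), and second, $\sqrt{f^{-1}(J)} = f^{-1}(\sqrt{J})$ for every ideal $J$ of $S$ (both inclusions follow by applying $f$ to powers and using surjectivity). I would also record at the outset that $K = kernel(f) \subseteq f^{-1}(J)$ because $0 \in J$, so $\sqrt{K} \subseteq \sqrt{f^{-1}(J)}$; this containment is precisely what lets the kernel hypothesis do its work in part (2).

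For part (1), I would first check that $f(I)$ is a proper ideal of $S$: it is an ideal because $f$ is surjective, and it is proper because $kernel(f) \subseteq I$ gives $f^{-1}(f(I)) = I + kernel(f) = I \neq R$. Then, given $0 \neq \alpha\beta \in f(I)$ with $\alpha, \beta \in S$, I would lift to $a, b \in R$ with $f(a) = \alpha$ and $f(b) = \beta$ using surjectivity. Since $f(ab) = \alpha\beta \neq 0$, we get simultaneously $ab \neq 0$ and $ab \in f^{-1}(f(I)) = I$, so $0 \neq ab \in I$. Weak semiprimariness of $I$ then yields $a \in \sqrt{I}$ or $b \in \sqrt{I}$, and applying $f$ together with $f(\sqrt{I}) \subseteq \sqrt{f(I)}$ gives $\alpha \in \sqrt{f(I)}$ or $\beta \in \sqrt{f(I)}$, as required.

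For part (2), the ideal $f^{-1}(J)$ is proper because $f$ is surjective (if $f^{-1}(J) = R$ then $J = f(R) = S$, a contradiction). Given $0 \neq ab \in f^{-1}(J)$ with $a, b \in R$, I would split on whether the image of the product vanishes. If $f(ab) \neq 0$, then $0 \neq f(a)f(b) \in J$, and weak semiprimariness of $J$ gives $f(a) \in \sqrt{J}$ or $f(b) \in \sqrt{J}$; pulling back through $\sqrt{f^{-1}(J)} = f^{-1}(\sqrt{J})$ finishes this case. The delicate case, and the reason the hypothesis that $kernel(f)$ is weakly semiprimary is present, is $f(ab) = 0$, that is $0 \neq ab \in kernel(f)$: here $J$ gives no information, so instead I would apply weak semiprimariness of $K = kernel(f)$ to conclude $a \in \sqrt{K}$ or $b \in \sqrt{K}$, and then invoke $\sqrt{K} \subseteq \sqrt{f^{-1}(J)}$ from the opening remark. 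In both cases $a \in \sqrt{f^{-1}(J)}$ or $b \in \sqrt{f^{-1}(J)}$, so $f^{-1}(J)$ is weakly semiprimary.

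The only genuinely non-routine point is this last case analysis in part (2). Because the ``weakly'' hypothesis is vacuous on products whose image is $0$, one cannot simply transport the condition across $f$, and the assumption on $kernel(f)$ is exactly what repairs that gap. Everything else reduces to the two standard radical identities above, so I do not expect any obstacle there.
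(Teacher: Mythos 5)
Your proposal is correct, but it takes a different route from the paper. The paper disposes of both parts in a few lines by reducing to its quotient-ring results: since $R/\mathrm{kernel}(f)$ is ring-isomorphic to $S$, part (1) is just Corollary \ref{s3.3}(1) applied to $I/\mathrm{kernel}(f)$, and part (2) is Corollary \ref{s3.3}(2) applied to $L/\mathrm{kernel}(f)$ where $L = f^{-1}(J)$. You instead give a self-contained, element-wise argument built on the two radical identities $f(\sqrt{I}) \subseteq \sqrt{f(I)}$ and $\sqrt{f^{-1}(J)} = f^{-1}(\sqrt{J})$, together with $f^{-1}(f(I)) = I$ when $\mathrm{kernel}(f) \subseteq I$. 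The mathematical core is the same: your case split in part (2) on whether $f(ab) = 0$ (i.e., $ab \in \mathrm{kernel}(f)$, where $J$ gives no information and the kernel hypothesis takes over) is exactly the case split $ab \in I$ versus $ab \in J \setminus I$ in the paper's proof of Theorem \ref{s3.2}(2), just transported across the isomorphism. What your version buys is transparency and independence: it avoids the implicit step of transporting the weakly semiprimary property and the ideal correspondence across the isomorphism $R/\mathrm{kernel}(f) \cong S$, and it makes the role of each hypothesis (surjectivity, $\mathrm{kernel}(f) \subseteq I$, the kernel being weakly semiprimary) visible at the exact point where it is used. What the paper's version buys is economy: the theorem is exhibited as an immediate corollary of the quotient results, so no new verification is needed. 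Your identification of the $f(ab)=0$ case as the genuinely non-routine point, and of the kernel hypothesis as precisely what repairs it, matches the paper's design exactly.
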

\begin{proof}
\begin{enumerate}
\item Since $I$ is a weakly semiprimary ideal of $R$ and $kernel(f) \subseteq I$, we conclude that $\frac{I}{kernel(f)}$ is  a weakly semiprimary ideal of $\frac{R}{kernel(f)}$ by Corollary \ref{s3.3}(1). Since $\frac{R}{kernel(f)}$ is ring-isomorphic to $S$, the result follows.

    \item Let $L = f^{-1}(J)$. Then $kernel(f) \subseteq L$. Since $\frac{R}{kernel(f)}$ is ring-isomorphic to $S$, we conclude that $\frac{L}{kernel(f)}$ is a weakly semiprimary ideal of $\frac{R}{kernel(f)}$. Since $kernel(f)$ is a weakly semiprimary ideal of $R$ and $\frac{L}{kernel(f)}$ is a weakly semiprimary ideal of $\frac{R}{kernel(f)}$, we conclude that $L = f^{-1}(J)$ is a weakly semiprimary ideal of $R$ by  Corollary \ref{s3.3}(2).
        \end{enumerate}
        \end{proof}

  \section{Weakly $\delta$-semiprimary ideals in  product of rings}
Let  $R_{1}$, ... , $R_{n}$, where $n \geq 2$, be commutative rings with $1 \not = 0$. Assume that $\delta_1, ... , \delta_n$  are  expansion functions of ideals of $R_1, ... , R_n$, respectively. Let $R = R_1 \times \cdots \times R_n$. We define a function $\delta_{\times}: I(R)\rightarrow I(R)$ such that $\delta_{\times}(I_1\times \cdots \times I_n) = \delta_1(I_1)\times \cdots \times \delta_n(I_n)$ for every $I_i \in I(R_i)$, where $1 \leq i \leq n$.  Then it is clear that $\delta_{\times}$ is an expansion function of ideals of $R$. Note that every ideal of $R$ is of the form $I_1\times \cdots \times I_n$, where each $I_i$ is an ideal of $R_i$, $1 \leq i \leq n$.

\begin{thm}\label{s4.1}
\label{t15} Let$\ R_{1}$ and $R_{2}$ be commutative rings with $1 \not = 0$, $R = R_{1}\times R_{2}$, $\delta_1, \delta_2$ be expansion functions of ideals of $R_1, R_2$, respectively. Let  $I$ be a proper ideal of $R_1$. Then the following statements are equivalent.
\begin{enumerate}
\item $I\times R_{2}$ is a weakly $\delta_{\times}$-semiprimary ideal of $R.$

\item $I\times R_{2}$ is a $\delta_{\times}$-semiprimary ideal of $R.$

\item $I$ is a $\delta_{1}$-semiprimary ideal of $R_1$.
\end{enumerate}
\end{thm}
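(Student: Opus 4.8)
The plan is to first record the one structural fact that drives everything: since $\delta_2$ is an expansion function, $R_2 \subseteq \delta_2(R_2) \subseteq R_2$ forces $\delta_2(R_2) = R_2$, and therefore $\delta_{\times}(I\times R_2) = \delta_1(I)\times \delta_2(R_2) = \delta_1(I)\times R_2$. With this computed, the equivalence $(2)\Leftrightarrow(3)$ becomes a routine element-chase. For $(3)\Rightarrow(2)$ I would take $(a_1,a_2),(b_1,b_2)\in R$ with product $(a_1b_1,a_2b_2)\in I\times R_2$; the condition only constrains the first coordinate, giving $a_1b_1\in I$, so $\delta_1$-semiprimariness of $I$ yields $a_1\in\delta_1(I)$ or $b_1\in\delta_1(I)$, whence $(a_1,a_2)\in\delta_1(I)\times R_2=\delta_{\times}(I\times R_2)$ or likewise for $(b_1,b_2)$. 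For the converse $(2)\Rightarrow(3)$ I would lift elements $a,b\in R_1$ with $ab\in I$ to $(a,1),(b,1)\in R$, note $(a,1)(b,1)=(ab,1)\in I\times R_2$, and read off $a\in\delta_1(I)$ or $b\in\delta_1(I)$ from membership in $\delta_1(I)\times R_2$.

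The implication $(2)\Rightarrow(1)$ is immediate from Theorem \ref{r1}(3), since every $\delta_{\times}$-semiprimary ideal is weakly $\delta_{\times}$-semiprimary. So the only implication with any content is $(1)\Rightarrow(2)$, and this is where I would lean on Theorem \ref{r3}. Arguing by contradiction, suppose $I\times R_2$ is weakly $\delta_{\times}$-semiprimary but \emph{not} $\delta_{\times}$-semiprimary. Then Theorem \ref{r3} forces $(I\times R_2)^2=\{(0,0)\}$. But because $R_2$ contains $1\neq 0$ we have $R_2\cdot R_2=R_2$, so $(I\times R_2)^2 = I^2\times R_2$, and the second coordinate $R_2$ is nonzero; this contradicts $(I\times R_2)^2=\{(0,0)\}$. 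Hence $I\times R_2$ must in fact be $\delta_{\times}$-semiprimary, giving $(1)\Rightarrow(2)$.

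Assembling these, the cycle $(1)\Rightarrow(2)\Rightarrow(3)\Rightarrow(2)\Rightarrow(1)$ (or more cleanly $(2)\Leftrightarrow(3)$ together with $(1)\Leftrightarrow(2)$) closes the equivalence. I do not anticipate a genuine obstacle: the computation $\delta_{\times}(I\times R_2)=\delta_1(I)\times R_2$ and the identity $(I\times R_2)^2=I^2\times R_2$ are the two load-bearing observations, and the only subtlety worth flagging is the use of $1\neq 0$ in $R_2$ — it is precisely what makes $R_2\cdot R_2=R_2$ nonzero and thereby collapses the weak notion onto the ordinary one in this product setting. The main step to state carefully is $(1)\Rightarrow(2)$, since it is the one that genuinely invokes the earlier structure theorem rather than unwinding definitions.
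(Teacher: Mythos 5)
Your proposal is correct and follows essentially the same route as the paper: the implication $(1)\Rightarrow(2)$ via Theorem \ref{r3} (since $(I\times R_2)^2 \neq \{(0,0)\}$), the implication $(2)\Rightarrow(3)$ by lifting $a,b \in R_1$ to $(a,1),(b,1)$, and a routine element-chase for the remaining direction, which the paper dismisses as ``clear.'' Your only additions are making explicit the computation $\delta_{\times}(I\times R_2)=\delta_1(I)\times R_2$ and phrasing $(1)\Rightarrow(2)$ as a contradiction rather than a contrapositive, neither of which changes the substance.
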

\begin{proof}
{(1)$\Rightarrow $(2)}.  Let $J = I\times R_{2}$. Then $J^2 \not = \{(0,0)\}$.  Hence $J$ is a $\delta_{\times}$-semiprimary ideal of $R$ by Theorem \ref{r3}.

{(2)$\Rightarrow $(3)}. Suppose that $I$ is not a $\delta_1$-semiprimary ideal of $%
R_{1}.$ Then there exist $a,b\in R_{1}$ such that $ab\in I$, but neither $%
a \in \delta_1(I)$ nor $b \in \delta_1(I)$. Since $%
(a,1)(b,1) = (ab, 1)\in I\times R_{2},$  we have  $%
(a,1)\in \delta_{\times}(I\times R_{2})$ or $(b,1)\in \delta_{\times}(I\times R_2)$. It follows that  $a\in \delta_1(I)$ or $b\in \delta_1(I)$, a contradiction. Thus $I$ is a $\delta_1$-semiprimary ideal of $R_{1}.$

{(3)$\Rightarrow $(1)}.  Let $I$ be a $\delta_1$-semiprimary ideal of $R_{1}$. Then it is clear that
$I\times R_{2}$ is a (weakly) $\delta_{\times}$-semiprimary ideal of $R$.
\end{proof}

\begin{thm}\label{s4.2} Let$\ R_{1}$ and $R_{2}$ be commutative rings with $1 \not = 0$, $R = R_1\times R_2$, and $\delta_1, \delta_2$ be expansion functions of ideals of $R_1, R_2$, respectively such that $\delta_2(K) = R_2$ for some ideal $K$ of $R_2$ if and only if $K = R_2$. Let $I = I_1 \times I_2$  be a proper  ideal of $R$, where $I_1, I_2$ are some ideals of $R_1$ and $R_2$, respectively. Suppose that $\delta_1(I_1) \neq R_1$. The following statements are equivalent.
\begin{enumerate}
\item $I$ is a weakly $\delta_{\times}$-semiprimary ideal of $R.$
\item $I = \{(0, 0)\}$ or $I = I_1\times R_2$ is a $\delta_{\times}$-semiprimary ideal of $R$ (and hence $I_1$ is a $\delta_1$-semiprimary ideal of $R_1$).
\end{enumerate}
\end{thm}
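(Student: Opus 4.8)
The plan is to prove the two implications separately, with essentially all of the content living in $(1)\Rightarrow(2)$. For $(2)\Rightarrow(1)$ the argument is immediate: if $I=\{(0,0)\}$ then $I$ is weakly $\delta_{\times}$-semiprimary by definition, since the hypothesis $0\neq ab\in I$ can never be met; and if $I=I_1\times R_2$ is $\delta_{\times}$-semiprimary, then it is weakly $\delta_{\times}$-semiprimary by Theorem \ref{r1}(3). For $(1)\Rightarrow(2)$ I would first record the two facts that drive the whole analysis. Since $\delta_1(I_1)\neq R_1$ and $I_1\subseteq\delta_1(I_1)$, we have $1\notin\delta_1(I_1)$ and $I_1$ is proper; and since $\delta_2(K)=R_2$ only when $K=R_2$, every proper ideal $K$ of $R_2$ satisfies $1\notin\delta_2(K)$. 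The natural dichotomy is then on whether $I_2=R_2$.

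If $I_2=R_2$, then $I=I_1\times R_2$ with $I_1$ a proper ideal of $R_1$, so Theorem \ref{s4.1} applies verbatim: a weakly $\delta_{\times}$-semiprimary ideal of the form $I_1\times R_2$ is automatically $\delta_{\times}$-semiprimary, and $I_1$ is then $\delta_1$-semiprimary. This is precisely the second alternative of statement (2), so this case requires no new work beyond invoking the earlier product theorem.

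If $I_2\neq R_2$, I claim $I=\{(0,0)\}$, which I would prove in two steps by exhibiting nonzero products in $I$ whose two factors both avoid $\delta_{\times}(I)=\delta_1(I_1)\times\delta_2(I_2)$. First, suppose $I_2\neq\{0\}$ and choose $0\neq w\in I_2$; then $(1,w)(0,1)=(0,w)$ is a nonzero element of $I$, while $(1,w)\notin\delta_{\times}(I)$ because $1\notin\delta_1(I_1)$ and $(0,1)\notin\delta_{\times}(I)$ because $I_2$ is proper and hence $1\notin\delta_2(I_2)$; this contradicts that $I$ is weakly $\delta_{\times}$-semiprimary, forcing $I_2=\{0\}$. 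Second, now that $I=I_1\times\{0\}$, suppose $I_1\neq\{0\}$ and choose $0\neq t\in I_1$; then $(t,1)(1,0)=(t,0)$ is a nonzero element of $I$, while $(1,0)\notin\delta_{\times}(I)$ since $1\notin\delta_1(I_1)$ and $(t,1)\notin\delta_{\times}(I)$ since $\{0\}$ is a proper ideal of $R_2$ and hence $1\notin\delta_2(\{0\})$; again a contradiction. Thus $I_1=\{0\}$ and $I=\{(0,0)\}$, the first alternative of (2).

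The step I expect to be the crux is locating these test products in the case $I_2\neq R_2$. The real content is that the hypothesis on $\delta_2$ (forbidding $\delta_2$ from inflating a proper ideal to all of $R_2$), together with $\delta_1(I_1)\neq R_1$, guarantees that the unit $1$ sits outside both coordinate expansions, so the pairs $\{(1,w),(0,1)\}$ and $\{(t,1),(1,0)\}$ genuinely witness a failure of the weakly $\delta_{\times}$-semiprimary condition unless the relevant coordinate of $I$ vanishes. I note that one could instead route through Theorem \ref{r3} — since for $I_2\neq R_2$ the ideal $I$ cannot be $\delta_{\times}$-semiprimary, whence $I^2=\{(0,0)\}$ — but the direct constructions above make no use of that fact and keep the argument self-contained.
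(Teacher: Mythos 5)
Your proof is correct and follows essentially the same route as the paper: both arguments hinge on factoring elements of $I$ with a unit in one coordinate, so that $\delta_1(I_1)\neq R_1$ together with the hypothesis on $\delta_2$ excludes both factors from $\delta_{\times}(I)=\delta_1(I_1)\times\delta_2(I_2)$, and both then finish by invoking Theorem \ref{s4.1}. The only difference is organizational: the paper takes an arbitrary nonzero $(x,y)\in I$ and uses the single factorization $(x,y)=(x,1)(1,y)$ to force $1\in\delta_2(I_2)$ and hence $I_2=R_2$ directly, whereas you prove the contrapositive ($I_2\neq R_2$ implies $I=\{(0,0)\}$) with two specialized test products --- the same mechanism, just requiring two witnesses where the paper needs one.
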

\begin{proof}
{$(1) \Rightarrow (2)$}. Assume that $ \{(0, 0)\} \neq I =I_{1}\times I_{2} $ is a weakly $\delta_\times$-semiprimary ideal of $R$.  Then there exists
 $ (0,0)\neq (x,y)\in I $ such that $ x\in I_{1} $ and $ y\in I_{2} $. Since $I$ is a weakly $\delta_\times$-semiprimary ideal of $R$ and $ (0,0)\neq (x, 1)(1, y) = (x, y)\in I $, we conclude $(x, 1)\in \delta_{\times}(I)$ or $(1, y)\in \delta_{\times}(I)$. Since $\delta_1(I_1) \neq R_1$, we conclude that $(1, y) \notin \delta_\times(I)$. Thus $(x, 1) \in \delta_\times(I)$ and hence $1\in \delta_2(I_2)$. Since $1 \in \delta_2(I_2)$, we conclude that $\delta_2(I_2) = R_2$ and hence $I_2 = R_2$ by hypothesis. Thus $I = I_1\times R_2$ is a $\delta_\times$-semiprimary ideal of $R$ by Theorem \ref{s4.1}.

 {$(2)\Rightarrow (1)$}. No comments.
\end{proof}

\begin{cor}\label{s4.3} Let$\ R_{1}$ and $R_{2}$ be commutative rings with $1 \not = 0$ and  $R = R_1\times R_2$. Let $I$ be a proper ideal of $R$. The following statements are equivalent.
\begin{enumerate}
\item $I$ is a weakly semiprimary ideal of $R.$
\item $I = \{(0, 0)\}$ or $I$ is a semiprimary ideal of $R$.
\item $I = \{(0, 0)\}$ or $I = I_1 \times R_2$ for some semiprimary ideal $I_1$ of $R_1$ or $I = R_1 \times I_2$ for some semiprimary ideal $I_2$ of $R_2$.
\end{enumerate}
\end{cor}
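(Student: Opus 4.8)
The plan is to read this off from the two product theorems \ref{s4.1} and \ref{s4.2} by specializing the expansion functions to the radical. First I would set $\delta_1(A)=\sqrt{A}$ on $R_1$ and $\delta_2(B)=\sqrt{B}$ on $R_2$; both are expansion functions. The key preliminary observation is that, since $\sqrt{I_1\times I_2}=\sqrt{I_1}\times\sqrt{I_2}$, the induced $\delta_{\times}$ is exactly the radical on $R=R_1\times R_2$. Consequently ``(weakly) $\delta_{\times}$-semiprimary'' coincides with ``(weakly) semiprimary'' on $R$, and ``$\delta_i$-semiprimary'' with ``semiprimary'' on each factor. I would also record the two facts that make the hypotheses of Theorem \ref{s4.2} automatic for the radical: $\sqrt{K}=R_i$ holds if and only if $K=R_i$, and $\delta_i(J)\ne R_i$ is the same condition as $J\ne R_i$.

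Granting this dictionary, the equivalence (1)$\Leftrightarrow$(2) is almost immediate. For (2)$\Rightarrow$(1), the ideal $\{(0,0)\}$ is weakly semiprimary by definition and any semiprimary ideal is weakly semiprimary by Theorem \ref{r1}(3). For (1)$\Rightarrow$(2), I would write $I=I_1\times I_2$ and assume $I\ne\{(0,0)\}$; since $I$ is proper, $I_1\ne R_1$ or $I_2\ne R_2$. If $I_1\ne R_1$ then $\sqrt{I_1}\ne R_1$, so Theorem \ref{s4.2} applies and forces $I$ to be semiprimary (its other alternative, $I=\{(0,0)\}$, being excluded). If instead $I_1=R_1$, then properness gives $I_2\ne R_2$, and I would invoke Theorem \ref{s4.2} with the two coordinates interchanged to reach the same conclusion.

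For (2)$\Leftrightarrow$(3) I would use the equivalence in Theorem \ref{s4.1} between ``$I_1\times R_2$ is semiprimary'' and ``$I_1$ is semiprimary'' (together with its mirror image for the second coordinate). Direction (3)$\Rightarrow$(2) is then immediate, as each semiprimary factor displayed in (3) is proper. For (2)$\Rightarrow$(3) with $I\ne\{(0,0)\}$ semiprimary, I would write $I=I_1\times I_2$ and test the product $(1,0)(0,1)=(0,0)\in I$: semiprimariness yields $(1,0)\in\sqrt{I}$ or $(0,1)\in\sqrt{I}$, that is, $I_1=R_1$ or $I_2=R_2$. In the remaining factor, which is proper precisely because $I$ is proper, Theorem \ref{s4.1} then shows it is semiprimary, producing exactly the two shapes listed in (3).

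The main thing to watch is the asymmetry of Theorem \ref{s4.2}: its hypothesis $\delta_1(I_1)\ne R_1$ privileges the first coordinate, so the step I expect to be most delicate is the case $I_1=R_1$ in (1)$\Rightarrow$(2), where I must re-apply the theorem after swapping $R_1$ and $R_2$ and verify that properness of $I$ indeed forces $I_2\ne R_2$, so that the swapped hypothesis is genuinely available.
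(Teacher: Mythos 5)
Your proposal is correct and matches the paper's intended argument: the paper states Corollary \ref{s4.3} without proof, positioning it as the specialization of Theorems \ref{s4.1} and \ref{s4.2} to the radical expansion functions, which is exactly what you carry out. Your added details --- the identity $\sqrt{I_1\times I_2}=\sqrt{I_1}\times\sqrt{I_2}$, the verification that the radical satisfies the hypotheses of Theorem \ref{s4.2}, the coordinate swap when $I_1=R_1$, and the test $(1,0)(0,1)=(0,0)$ for (2)$\Rightarrow$(3) --- are all sound and fill in precisely what the paper leaves implicit.
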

\section{Strongly weakly $\delta$-semiprimary ideals}
\begin{defn}
Let $\delta$ be an expansion function of ideals of a ring $R$. A proper ideal $I$ of $R$ is called a {\it strongly weakly $\delta$-semiprimary ideal} of $R$ if whenever $\{0\} \not = AB \subseteq I$ for some ideals $A, B$ of $R$, then $A\subseteq \delta(I)$ or $B \subseteq \delta(I)$. Hence, a proper ideal $I$ of $R$ is called a {\it strongly weakly semiprimary ideal} of $R$ if whenever $\{0\} \not = AB \subseteq I$ for some ideals $A, B$ of $R$, then $A \subseteq \sqrt{\{0\}}$ or $B \subseteq \sqrt{\{0\}}$.
\end{defn}

\begin{rem}
Let $\delta$ be an expansion function of ideals of a ring $R$. It is clear that a strongly weakly $\delta$-semiprimary ideal of $R$ is a weakly $\delta$-semiprimary ideal of $R$. In this section, we show that a proper ideal $I$ of $R$ is a strongly weakly $\delta$-semiprimary ideal of $R$ if and only if $I$ is a weakly $\delta$-semiprimary ideal of $R$.
\end{rem}

\begin{thm}\label{r8}
Let $\delta$ be an expansion function of ideals of a ring $R$ and $I$ be a weakly $\delta$-semiprimary ideal of $R$. Suppose that $AB \subseteq I$ for some ideals $A, B$ of $R$ and suppose that $ab = 0$ for some $a \in A$ and $b\in B$  such that  neither $a \in \delta(I)$ nor $b \in \delta(I)$. Then $AB = \{0\}$.
\end{thm}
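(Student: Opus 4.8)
The plan is to prove that $a'b' = 0$ for every $a' \in A$ and every $b' \in B$, which is exactly the assertion $AB = \{0\}$ since $AB$ is generated by such products. The pair $(a,b)$ with $ab = 0$ and $a,b \notin \delta(I)$ will serve as an anchor. The key structural fact I would exploit repeatedly is that $\delta(I)$ is an \emph{ideal} (as $\delta$ maps $I(R)$ into $I(R)$): whenever a sum $u + v$ lies in $\delta(I)$ and one summand lies in $\delta(I)$, the other summand does too. This cancellation is what converts each application of the weakly $\delta$-semiprimary hypothesis into a contradiction with $a \notin \delta(I)$ or $b \notin \delta(I)$.

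First I would establish the intermediate claim that $ab' = 0$ for every $b' \in B$. Suppose not, so $0 \not= ab' \in AB \subseteq I$; since $I$ is weakly $\delta$-semiprimary and $a \notin \delta(I)$, this forces $b' \in \delta(I)$. Now consider $a(b+b') = ab + ab' = ab' \not= 0$ (using $ab = 0$). As $b + b' \in B$, this lies in $AB \subseteq I$ and is nonzero, so again $b + b' \in \delta(I)$; but then $b = (b+b') - b' \in \delta(I)$, contradicting $b \notin \delta(I)$. Hence $ab' = 0$ for all $b' \in B$, and by the symmetric argument (interchanging the roles of $A,a$ and $B,b$) also $a'b = 0$ for all $a' \in A$.

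The main step combines these two claims. For arbitrary $a' \in A$ and $b' \in B$, expanding gives $(a + a')(b + b') = ab + ab' + a'b + a'b' = a'b'$, using $ab = 0$ together with the two intermediate claims. Assume toward a contradiction that $a'b' \not= 0$. Applying the weakly $\delta$-semiprimary condition to this nonzero element of $I$ yields $a + a' \in \delta(I)$ or $b + b' \in \delta(I)$. In the first case, $a \notin \delta(I)$ forces $a' \notin \delta(I)$, so from $0 \not= a'b' \in I$ we get $b' \in \delta(I)$; then $a'(b + b') = a'b' \not= 0$ lies in $I$ and forces $b + b' \in \delta(I)$, whence $b = (b+b') - b' \in \delta(I)$, a contradiction. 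The second case is handled symmetrically, using $ab' = 0$ in place of $a'b = 0$ and the product $(a + a')b'$. Either way $a'b' = 0$, so $AB = \{0\}$.

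I expect the final step to be the main obstacle. Unlike the proof that $I^2 = \{0\}$ in Theorem \ref{r3}, here $a'$ and $b'$ are arbitrary and need not themselves be dual-zero elements, so the single expansion $(a+a')(b+b') = a'b'$ does not contradict anything on its own. The resolution is the two-tier argument: first use the anchor pair and the ideal property of $\delta(I)$ to pin down membership of $a'$ (or $b'$) alone, and then feed that back through a \emph{second} application of the hypothesis to a mixed product such as $a'(b+b')$ or $(a+a')b'$. The only delicate point is bookkeeping—keeping careful track of which summand is already known to lie in $\delta(I)$ so that the ideal closure of $\delta(I)$ can be legitimately invoked at each subtraction.
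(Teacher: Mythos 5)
Your proof is correct and follows essentially the same route as the paper's: the identical first step showing $aB = bA = \{0\}$ via the anchor pair $(a,b)$, followed by perturbation arguments using the ideal property of $\delta(I)$ to kill any nonzero product $a'b'$. The only difference is bookkeeping in the final step --- you apply the hypothesis to the doubly perturbed product $(a+a')(b+b')$ first and split into two cases, while the paper applies it to $a'b'$ itself and splits into three cases according to which of the two factors lies in $\delta(I)$; both case analyses rest on the same ideas.
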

\begin{proof}
First we show that $aB = bA = \{0\}$. Suppose that $aB \not = \{0\}$. Then $0 \not = ac \in I$ for some $c \in B$. Since $I$ is a weakly $\delta$-semiprimary ideal of $R$ and $a\not \in \delta(I)$, we conclude that $c \in \delta(I)$.  Hence $0\not = a(b + c) = ac \in I$. Thus $a \in \delta(I)$ or $(b + c) \in \delta(I)$. Since $c \in \delta(I)$, we conclude that $a \in \delta(I)$ or $b \in \delta(I)$, a contradiction. Thus $aB = \{0\}$. Similarly, $bA = \{0\}$. Now suppose that $AB \not = \{0\}$. Then there is an element $d \in A$ and there is an element $e \in B$ such that $0 \not = de \in I$. Since $I$ is a weakly $\delta$-semiprimary ideal of $R$, we conclude that $d \in \delta(I)$ or $e \in \delta(I)$. We consider three cases: {\bf Case I}. Suppose that $d \in \delta(I)$ and $e \not \in \delta(I)$. Since $aB = \{0\}$, we have $0 \not = e(d + a) = ed \in I$, we conclude that $e \in \delta(I)$ or $(d + a) \in \delta(I)$. Since $d \in \delta(I)$, we have $e \in \delta(I)$ or $a \in \delta(I)$, a contradiction. {\bf Case II}. Suppose that $d \not \in \delta(I)$ and $e \in \delta(I)$. Since $bA = \{0\}$, we have $0 \not = d(e + b) = de \in I$, we conclude that $d \in \delta(I)$ or $(e + b) \in \delta(I)$. Since $e \in \delta(I)$, we have $d \in \delta(I)$ or $b \in \delta(I)$, a contradiction. {\bf Case III}. Suppose that $d, e \in \delta(I)$. Since $aB = bA = \{0\}$, we have $0 \not =(b + e)(d + a) = ed \in I$, we conclude that $b + e \in \delta(I)$ or $d + a \in \delta(I)$. Since $d, e \in \delta(I)$, we have $b \in \delta(I)$ or $a \in \delta(I)$, a contradiction.  Thus $AB = \{0\}$.
\end{proof}

 \begin{thm}\label{r9}
 Let $\delta$ be an expansion function of ideals of a ring $R$ and $I$ be a weakly $\delta$-semiprimary ideal of $R$. Suppose that $\{0\} \not = AB \subseteq I$ for some ideals $A, B$ of $R$. Then $A\subseteq \delta(I)$ or $B \subseteq \delta(I)$ (i.e., $I$ is a strongly weakly $\delta$-semiprimary ideal of $R$).
 \end{thm}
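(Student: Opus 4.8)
The plan is to argue by contradiction and to let Theorem \ref{r8} carry essentially all of the weight, since that result already handles the delicate dual-zero analysis. Suppose, toward a contradiction, that neither $A \subseteq \delta(I)$ nor $B \subseteq \delta(I)$. Then I can pick elements $a \in A$ and $b \in B$ with $a \notin \delta(I)$ and $b \notin \delta(I)$. Because $a \in A$ and $b \in B$, the product satisfies $ab \in AB \subseteq I$, so this single product is the object I want to analyze.

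Next I would split into two cases according to whether $ab$ vanishes. If $ab \neq 0$, then $0 \neq ab \in I$, and since $I$ is weakly $\delta$-semiprimary the defining property forces $a \in \delta(I)$ or $b \in \delta(I)$, contradicting the choice of $a$ and $b$. If instead $ab = 0$, then the hypotheses of Theorem \ref{r8} are satisfied verbatim: we have $AB \subseteq I$, together with $a \in A$, $b \in B$ such that $ab = 0$ and neither $a \in \delta(I)$ nor $b \in \delta(I)$. That theorem then yields $AB = \{0\}$, which directly contradicts the standing hypothesis $\{0\} \neq AB$.

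Since both cases lead to a contradiction, the supposition is untenable, and therefore $A \subseteq \delta(I)$ or $B \subseteq \delta(I)$. This is precisely the condition that $I$ is a strongly weakly $\delta$-semiprimary ideal of $R$, completing the argument.

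I expect no genuine obstacle in this step: all of the real difficulty (the reduction $aB = bA = \{0\}$ and the three-case analysis on $d \in A$, $e \in B$) has already been absorbed into Theorem \ref{r8}. The only point meriting a moment's care is to verify that the element-level choice $a \notin \delta(I)$, $b \notin \delta(I)$ is exactly the configuration that Theorem \ref{r8} forbids once $AB \neq \{0\}$, so that the zero-product case is incompatible with the hypothesis and cannot occur.
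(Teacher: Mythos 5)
Your proof is correct and follows essentially the same route as the paper: both arguments let Theorem \ref{r8} dispose of the zero-product configuration (since $AB \neq \{0\}$ rules out a pair $a \in A$, $b \in B$ with $ab = 0$ and neither in $\delta(I)$) and use the weakly $\delta$-semiprimary definition for the nonzero-product case. The only difference is cosmetic: you argue by contradiction with one bad element from each ideal, while the paper fixes $x \in A \setminus \delta(I)$ and shows directly that every $y \in B$ lies in $\delta(I)$.
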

 \begin{proof} Since $AB \not = \{0\}$, by Theorem \ref{r8} we conclude that whenever $ab \in I$ for some $a \in A$ and $b \in B$, then $a \in \delta(I)$ or $b \in \delta(I)$. Assume that $\{0\}\neq AB\subseteq I $ and $ A \not\subseteq \delta(I) $. Then there is an $x\in A\setminus\delta(I) $. Let $y\in B$. Since $xy\in AB\subseteq I$ and $\{0\} \not = AB$ and $x\not \in \delta(I)$, we conclude that $y \in \delta(I)$ by Theorem \ref{r8}.  Hence $ B\subseteq\delta(I)$.
 \end{proof}
 In view of Theorem \ref{r9}, we have the following result.
 \begin{cor}\label{r10}
 Let $I$ be a weakly semiprimary ideal of $R$. Suppose that $\{0\} \not = AB \subseteq I$ for some ideals $A, B$ of $R$. Then $A\subseteq \sqrt{I}$ or $B \subseteq \sqrt{I}$ (i.e., $I$ is a strongly weakly semiprimary ideal of $R$).
 \end{cor}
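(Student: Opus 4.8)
The plan is to obtain this corollary as an immediate specialization of Theorem \ref{r9} to the radical expansion function. Recall that the map $\delta : I(R) \to I(R)$ defined by $\delta(I) = \sqrt{I}$ is an expansion function of ideals of $R$: indeed $I \subseteq \sqrt{I}$ for every ideal $I$, and $J \subseteq I$ forces $\sqrt{J} \subseteq \sqrt{I}$, which are precisely the two conditions in the definition of an expansion function (this was already recorded among the introductory examples). For this particular $\delta$, the definitions identify weakly $\delta$-semiprimary ideals with weakly semiprimary ideals, since the condition $a \in \delta(I)$ reads exactly as $a \in \sqrt{I}$.

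With this identification in hand, I would argue as follows. Let $I$ be a weakly semiprimary ideal of $R$ and suppose $\{0\} \neq AB \subseteq I$ for some ideals $A, B$ of $R$. Viewing $I$ as a weakly $\delta$-semiprimary ideal for $\delta = \sqrt{\,\cdot\,}$, Theorem \ref{r9} applies verbatim and yields $A \subseteq \delta(I)$ or $B \subseteq \delta(I)$. Substituting $\delta(I) = \sqrt{I}$ gives $A \subseteq \sqrt{I}$ or $B \subseteq \sqrt{I}$, which is the desired conclusion and is exactly the assertion that $I$ is strongly weakly semiprimary.

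There is essentially no obstacle here: the entire content of the corollary is already carried by Theorem \ref{r9}, and all that remains is to observe that the radical map is a legitimate expansion function and that its associated weakly $\delta$-semiprimary notion coincides with weak semiprimality. Both observations are routine and have been made explicitly earlier in the paper, so the corollary follows at once.
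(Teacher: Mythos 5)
Your proposal is correct and matches the paper's own derivation exactly: the paper also obtains Corollary \ref{r10} as an immediate specialization of Theorem \ref{r9} to the expansion function $\delta(I)=\sqrt{I}$, with no additional argument required.
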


{\bf Acknowledgement}.

 The authors are grateful to the referee for the great effort in proofreading the manuscript.

\end{document}